\numberwithin{equation}{section}
\newtheorem{Theorem}{Theorem}[section]
\newtheorem{Definition}[Theorem]{Definition}
\newtheorem{Proposition}[Theorem]{Proposition}
\newtheorem{Assumption-Notation}[Theorem]{Assumption-Notation}
\newtheorem{Remark}[Theorem]{Remark}
\newtheorem{Claim}[Theorem]{Claim}
\newtheorem{Example}[Theorem]{Example}
\newtheorem{Conjecture}[Theorem]{Conjecture}
\def\dim{\operatorname{dim}}
\begin{document}

\title[$C_{n,n-1}$ in positive characteristics]{The subadditivity of the Kodaira Dimension for Fibrations of Relative Dimension One in Positive Characteristics}

\author{Yifei Chen}

\address{Y. Chen: Hua Loo-Keng Key Laboratory of Mathematics, Academy of Mathematics and Systems Science,
         Chinese Academy of Sciences,
         No. 55 Zhonguancun East Road,
         Haidian District, Beijing, 100190,
         P.R.China}
%\date{Sep 14, 2008}
%\tableofcontents
\email{yifeichen@amss.ac.cn}

\author{Lei Zhang}
\address{L. Zhang: College of Mathematics and Information Sciences, Shaanxi Normal University, Xi'an 710062, P.R.China}
\email{lzhpkutju@gmail.com}

\maketitle

\begin{abstract} Let $f:X\rightarrow Z$ be a separable fibration of relative dimension 1 between smooth projective varieties over an algebraically closed field $k$ of positive characteristic.  We prove  the subadditivity of Kodaira dimension $\kappa(X)\geq\kappa(Z)+\kappa(F)$, where $F$  is the  generic geometric fiber of $f$, and $\kappa(F)$ is the Kodaira dimension of the normalization of $F$. Moreover, if $\dim X=2$ and $\dim Z=1$, we have a stronger inequality $\kappa(X)\geq \kappa(Z)+\kappa_1(F)$ where $\kappa_1(F)=\kappa(F,\omega^o_F)$ is the Kodaira dimension of the dualizing sheaf $\omega_F^o$.
\end{abstract}

\section{Introduction}

The following conjecture due to Iitaka is a central problem of the classification theory of algebraic varieties over $\mathbb{C}$, the field of complex numbers:

\begin{Conjecture}[$C_{n,m}$] Let $f:X\rightarrow Z$ be a surjective morphism of proper, smooth varieties over $\mathbb{C}$, where $n=\dim X$ and $m=\dim Z$. Assuming the generic geometric fibre $F$ of $f$ is connected, we have the following inequality for the Kodaira dimension:
  $$\kappa(X)\geq \kappa(F)+\kappa(Z).$$
\end{Conjecture}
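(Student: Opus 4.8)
The plan is to follow the now-standard Hodge-theoretic route, reducing the global inequality to the positivity of relative pluricanonical sheaves. The central object is the relative dualizing sheaf $\omega_{X/Z}=\omega_X\otimes f^*\omega_Z^{-1}$ and its direct images $f_*\omega_{X/Z}^{\otimes m}$ for $m\geq 1$. First I would pass, via Kawamata's semistable reduction after a generically finite base change together with a modification of $X$, to a model on which $f$ is as mild as possible (semistable in codimension one), so that the behavior of $\omega_{X/Z}$ along the discriminant locus is controlled. Since $\kappa(X)$, $\kappa(Z)$, and $\kappa(F)$ are birational invariants and transform predictably under such base changes, this reduction is free; the bookkeeping is Viehweg's covering-trick argument.

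The key input is the theorem of Fujita ($m=1$), Kawamata and Viehweg that $f_*\omega_{X/Z}^{\otimes m}$ is weakly positive over a dense open subset of $Z$. This rests on the semipositivity of the Hodge bundle $f_*\omega_{X/Z}$ attached to the variation of Hodge structure on $R^{n-m}f_*\mathbb{C}$, extended across the boundary through the canonical (nilpotent-orbit) extension. From weak positivity one extracts pluricanonical sections: if $P_m(F)=h^0(F,\omega_F^{\otimes m})$ grows like $m^{\kappa(F)}$, then twisting the weakly positive sheaf by a small ample line bundle and comparing $\omega_X^{\otimes m}$ with $f^*\omega_Z^{\otimes m}\otimes\omega_{X/Z}^{\otimes m}$ produces enough sections on $X$ to force $\kappa(X)\geq\kappa(F)+\kappa(Z)$ — provided the base contributes its full Kodaira dimension. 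The clean case is $Z$ of general type, $\kappa(Z)=\dim Z$, where the bigness of $\omega_Z$ combines directly with the weak positivity of the fiber contribution; here Kawamata and Viehweg give an unconditional proof.

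To handle arbitrary $Z$, I would invoke Viehweg's reduction. Passing to a birational model of the Iitaka fibration $g:Z\to W$ of $Z$, whose base is of general type and whose general fiber has Kodaira dimension zero, one attempts to reduce $C_{n,m}$ to the sharper statement $C_{n,m}^+$, namely $\kappa(X)\geq\kappa(F)+\max\{\kappa(Z),\operatorname{Var}(f)\}$, coupled with the principle that maximal variation of the family yields a \emph{big} subsheaf of some $f_*\omega_{X/Z}^{\otimes m}$. The latter is the Viehweg--Zuo construction, which builds a logarithmic Higgs subsheaf from the moduli map and, when $\operatorname{Var}(f)=\dim Z$, produces an honest big line bundle absorbing the base. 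Alternatively, following Kawamata, if the general fiber $F$ admits a good minimal model (so abundance holds in relative dimension $n-m$), one forms a relative Iitaka fibration with nef relative canonical class and runs the positivity argument directly.

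The hard part — and the reason the conjecture is still open in full generality — is exactly the intermediate regime in which neither $\kappa(Z)$ nor $\operatorname{Var}(f)$ is maximal and one cannot assume a good minimal model for $F$. The positivity of $f_*\omega_{X/Z}^{\otimes m}$ is genuinely \emph{weak} positivity, not bigness, so it controls the base direction only up to the variation; closing the remaining gap requires either the full minimal model program and abundance for the fibers $F$, or a refined semipositivity for the moduli part of $\omega_{X/Z}$ strong enough to dominate the Iitaka fibration of $Z$. I expect this conversion of Hodge-theoretic semipositivity into sufficient bigness to be the decisive obstacle; in the present state of the art it can be carried out unconditionally only under additional hypotheses — $Z$ of general type, relative dimension one ($C_{n,n-1}$), base a curve ($C_{n,1}$), fibers of general type, or maximal variation.
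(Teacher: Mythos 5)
You were asked to prove a statement that the paper itself labels a \emph{Conjecture}: the Iitaka conjecture $C_{n,m}$ over $\mathbb{C}$ is open in general, and the paper contains no proof of it, only the list of known cases (Viehweg for $\dim F\leq 2$; Kawamata for $Z$ of general type, for $\dim Z=1$, and for $F$ admitting a good minimal model; Koll\'ar for $F$ of general type; Chen--Hacon and Fujino in the maximal Albanese dimension cases; Birkar for $n\leq 6$). Your text, to its credit, does not pretend otherwise: it is an accurate survey of the standard characteristic-zero strategy --- weak positivity of $f_*\omega_{X/Z}^{\otimes m}$ via semipositivity of Hodge bundles, the reduction to the stronger statement $C^+_{n,m}$ with $\kappa(X)\geq\kappa(F)+\max\{\kappa(Z),\operatorname{Var}(f)\}$, Kawamata's good-minimal-model case --- and it correctly locates the genuinely open regime, where neither $\kappa(Z)$ nor $\operatorname{Var}(f)$ is maximal and no good minimal model for $F$ is known. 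So the gap in your proposal is simply the conjecture itself: the decisive step you flag (converting weak positivity of the fiber direction into positivity strong enough to dominate the Iitaka fibration of $Z$) is exactly what is missing in the literature, and no bookkeeping along the lines you sketch can close it. One small attribution slip: the unconditional case of $Z$ of general type is due to Kawamata, though Viehweg's weak positivity also yields it.

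It is worth noting how your sketch relates to what the paper actually proves, namely positive-characteristic analogues of $C_{n,n-1}$. There your entire Hodge-theoretic scaffolding is unavailable: weak positivity of $f_*\omega_{X/Z}^{\otimes l}$ fails in characteristic $p$ (the paper points to Raynaud's and Xie's examples where these sheaves are not nef for any $l>0$), and resolution of singularities and semistable reduction are not known. The paper's substitutes are instructive: de Jong's alterations in place of Kawamata's semistable reduction, Keel's theorem on the semi-ampleness and bigness of $\omega_{X/S}$ for stable fibrations (together with the wronskian section and Cornalba's positivity of $\det f_*\omega_{X/S}$) in place of Hodge-bundle semipositivity, and a Grothendieck-duality comparison of relative dualizing sheaves under non-flat, generically finite base change in place of Viehweg's flat base-change arguments. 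So even in the one case of your outline that is a theorem in relative dimension one, the mechanism this paper uses is quite different from the one you describe, precisely because the characteristic-zero positivity inputs break down.
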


This conjecture is proved in the following cases:

\begin{itemize}
\item[(1)]{$\dim F = 1,2$ by Viehweg (\cite{Vie1},\cite{Vie});}
\item[(2)]{$Z$ is of general type by Kawamata (\cite{Ka0});}
\item[(3)]{$\dim Z=1$ by Kawamata (\cite{Ka1});}
\item[(4)]{$F$ has a good minimal model by Kawamata (\cite{Ka2});}
\item[(5)]{$F$ is of general type by Koll\'{a}r (\cite{Ko});}
\item[(6)]{$Z$ is of maximal Albanese dimension by J. A. Chen and Hacon (\cite{CH});}
\item[(7)]{$F$ is of maximal Albanese dimension by Fujino (\cite{Fu});}
\item[(8)]$n\leq 6$  by Birkar (\cite{bi}).
\end{itemize}

Since the existence of good minimal models of $F$ is proved for certain cases (see \cite{bchm},\cite{Lai}), (1,5,7) are special cases of (4).

In this paper, we consider subadditivity of Kodaira dimension of a fibration of relatively dimension 1 over an algebraically closed field $k$ of positive characteristic.  We call a projective surjective morphism $f:X\rightarrow Z$ between two varieties a \emph{fibration}, if $f_*\mathcal{O}_X=\mathcal{O}_Z$. We say a fibration $f:X\rightarrow Z$ is \emph{separable} if the field extension $f^*:k(Z)\rightarrow k(X)$ between the rational function fields is separable and $k(Z)$ is algebraically closed in $k(X)$. Then general fiber of a separable fibration is geometrically integral (\cite{ba01} Theorem 7.1).

Over a field of positive characteristic, the generic geometric fiber of $f$ is possibly not smooth. So a proper definition of Kodaira dimension for singular varieties is needed. We have two attempts (Definition \ref{D:Kod} and \ref{D:Kod1}).

 The main results of the paper are:

\begin{Theorem}\label{T:Cnn-1}
Let $f: X\rightarrow Z$ be a separable fibration of relative dimension 1 between smooth projective varieties over an algebraically closed field $k$ of positive characteristic. Then
\begin{equation}\label{E:Cnn-1}
\kappa(X) \geq \kappa(Z) + \kappa(F),
\end{equation}
where $\kappa(F)=\kappa(\tilde{F},\omega_{\tilde{F}})$ (see Definition \ref{D:Kod}), $F$ the generic geometric fiber of $f$, $\tilde{F}$ the normalization of $F$, and $\omega_{\tilde{F}}$  the canonical line bundle of $\tilde{F}$.
\end{Theorem}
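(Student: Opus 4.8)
The plan is to derive the inequality from the relative canonical decomposition $\omega_X\cong\omega_{X/Z}\otimes f^*\omega_Z$ together with a positivity statement for the direct image sheaves $\mathcal{E}_m:=f_*\omega_{X/Z}^{\otimes m}$. I first dispose of the trivial case: if the normalized generic geometric fiber $\tilde F$ has genus zero, then $\kappa(F)=-\infty$ by Definition \ref{D:Kod} and the assertion is vacuous, so I assume $g(\tilde F)\geq 1$ henceforth.

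The engine of the argument is an elementary growth estimate for plurigenera. By the projection formula one has $H^0(X,\omega_X^{\otimes m})\cong H^0\!\bigl(Z,\mathcal{E}_m\otimes\omega_Z^{\otimes m}\bigr)$, and after passing to the normalization the rank of $\mathcal{E}_m$ is comparable to $h^0(\tilde F,\omega_{\tilde F}^{\otimes m})$, which grows like $m^{\kappa(F)}$. Granting that $\mathcal{E}_m$ is weakly positive in the sense of Viehweg for $m\gg 0$ (equivalently, that $\det\mathcal{E}_m$ is pseudoeffective and $\mathcal{E}_m$ is generically globally generated after a suitable finite cover of $Z$), tensoring with $\omega_Z^{\otimes m}$ retains enough sections that $h^0\bigl(Z,\mathcal{E}_m\otimes\omega_Z^{\otimes m}\bigr)$ grows at least like $(\operatorname{rk}\mathcal{E}_m)\cdot h^0(Z,\omega_Z^{\otimes m})$, which is of order $m^{\kappa(F)}\cdot m^{\kappa(Z)}$; this growth rate yields $\kappa(X)\geq\kappa(Z)+\kappa(F)$.

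The decisive obstacle is the positivity of $\mathcal{E}_m$ in positive characteristic, where the Hodge-theoretic semipositivity theorems of Fujita, Kawamata and Viehweg are unavailable. Exploiting that the relative dimension is one, I would control $\mathcal{E}_m$ by a Frobenius-theoretic analysis: after pulling back along iterates of the Frobenius on $Z$, one tracks the Harder--Narasimhan slopes of $\mathcal{E}_m$ and rules out destabilizing quotients of negative slope, thereby forcing $\det\mathcal{E}_m$ to be pseudoeffective. Interlocked with this is the failure of smoothness of the generic geometric fiber: the restriction $\omega_{X/Z}|_F$ is the dualizing sheaf $\omega_F^o$, which differs from $\omega_{\tilde F}$ on the normalization by the conductor, so the comparison between $\mathcal{E}_m$ and the data of $\omega_{\tilde F}$ must pass through the normalization sequence. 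This loss of the conductor contribution is exactly what forces the weaker invariant $\kappa(F)=\kappa(\tilde F,\omega_{\tilde F})$ in the general statement; only when $\dim Z=1$ can the conductor be accounted for precisely enough along the curve $Z$ to recover the stronger bound with $\kappa_1(F)=\kappa(F,\omega_F^o)$.
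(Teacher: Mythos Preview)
Your proposal has a genuine gap at its core: the weak positivity of $\mathcal{E}_m=f_*\omega_{X/Z}^{\otimes m}$ that you hope to establish by a Frobenius-theoretic slope analysis is known to \emph{fail} in positive characteristic. Raynaud and Xie have constructed fibrations $f:S\to C$ of smooth surfaces over smooth curves for which $f_*\omega_{S/C}^{\otimes l}$ is not nef for any $l>0$ (see Remark~\ref{R:K_not_nef}); on a curve this means precisely that these bundles admit quotient line bundles of negative degree, so no Harder--Narasimhan argument after Frobenius pullback can rule such quotients out. Since the inequality of Theorem~\ref{T:Cnn-1} nonetheless holds in these examples, a correct proof must bypass direct-image positivity altogether; your ``granting that $\mathcal{E}_m$ is weakly positive'' is granting something false.

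The paper's argument takes a completely different route. Rather than analyzing $f_*\omega_{X/Z}^{\otimes m}$, one alters the base via de Jong so that, after a (generally non-flat) base change $Z'\to Z$ and a birational modification, the new fibration factors through a stable (or $1$-pointed stable) fibration $h:U\to M$ with $M\to\bar M_g$ finite. Keel's theorem (Theorem~\ref{ggt2}) then gives that $\omega_{U/M}$ is \emph{big} when $g\geq 2$, and Theorem~\ref{g1} handles $g=1$. The remaining work is to transport this bigness of the relative canonical sheaf back to $X$: Theorem~\ref{compds} compares $\omega_{X'/Z'}$ with the pullback of $\omega_{X/Z}$ under the non-flat alteration up to an exceptional divisor, Proposition~\ref{pf} gives the injection $\rho'^*\omega_{X''/Z'}\hookrightarrow\omega_{X'/Z'}$, and the Covering Theorem (Theorem~\ref{cl}) disposes of the exceptional contribution. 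A short contradiction argument then shows that if $\kappa(X)=\kappa(Z)$ while $g(\tilde F)\geq 2$, the Iitaka map of $\omega_X$ would contract a general fiber whereas the bigness of $\omega_{U/M}$ forces the Iitaka map of $\omega_{X/Z}$ not to. At no point is any semipositivity of $f_*\omega_{X/Z}^{\otimes m}$ invoked.
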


\begin{Theorem} \label{T:C21}
Let $f:S\rightarrow C$ be a fibration between a smooth projective surface $S$ and a smooth projective curve $C$ over an algebraically closed field $k$ of positive characteristic. Then
\begin{equation}\label{E:C21}
\kappa(S)\geq \kappa_1(F)+\kappa(C),
\end{equation}
where $F$ is the generic geometric fiber of $f$,  $\kappa_1(F):=\kappa(F,\omega_F^o)$  and $\omega_F^o$ is the dualizing sheaf of $F$ (see Definition \ref{D:Kod1}).
\end{Theorem}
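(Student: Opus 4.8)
The plan is to reduce to a relatively minimal fibration and then convert the statement into numerical positivity of the canonical class, which on a surface can be decided by intersection numbers once $K_S$ is known to be nef. First I would contract the $(-1)$-curves contained in fibres, replacing $f$ by a relatively minimal model (again denoted $f:S\to C$); since these contractions occur inside fibres and are birational morphisms of smooth surfaces, neither $\kappa(S)$ nor the generic geometric fibre $F$, hence $\kappa_1(F)$, is affected. Writing $K_{S/C}=K_S-f^*K_C$, adjunction on the smooth surface $S$ together with $F^2=0$ yields $\omega_{S/C}|_F\cong\omega_F^o$, so that, for a general closed fibre (still written $F$), one has $K_{S/C}\cdot F=\deg\omega_F^o=2p_a(F)-2$. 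The two degenerate cases are then immediate: if $g(C)=0$ then $\kappa(C)=-\infty$, while if $p_a(F)=0$ then $\deg\omega_F^o<0$ and $\kappa_1(F)=-\infty$; in either situation the right-hand side is $-\infty$. Hence I may assume $g(C)\ge 1$ and $p_a(F)\ge 1$.

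The central input is the nefness of the relative dualizing sheaf $\omega_{S/C}$ for a relatively minimal fibration with $p_a(F)\ge 1$. Granting this, and noting that $\omega_C$ is nef because $g(C)\ge 1$, the class $K_S=\omega_{S/C}\otimes f^*\omega_C$ is nef, and by the abundance theorem for surfaces in positive characteristic $K_S$ is semiample with $\kappa(S)=\nu(K_S)$, the numerical dimension; in particular $\kappa(S)\ge 0$. It is worth stressing here the conceptual reason $\kappa_1(F)$ (the arithmetic genus) is the correct invariant rather than the geometric genus of $\widetilde F$: a geometrically singular fibre, such as the cuspidal rational fibre of a quasi-elliptic fibration, does \emph{not} make $S$ uniruled, because the generic fibre is regular over the non-perfect field $k(C)$ and carries no ruling defined over $k(C)$; the sheaf $\omega_{S/C}$ detects exactly the arithmetic genus, and this is what the numerical positivity controls.

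The proof then finishes by a case analysis driven by the intersection numbers $K_S\cdot F=2p_a(F)-2$ and, using $(f^*K_C)^2=0$ and $K_{S/C}\cdot f^*K_C=(2g(C)-2)(2p_a(F)-2)$, the identity $K_S^2=K_{S/C}^2+2(2g(C)-2)(2p_a(F)-2)$, together with $K_{S/C}^2\ge 0$ (a nef class on a surface has nonnegative self-intersection). If $p_a(F)\ge 2$ and $g(C)\ge 2$, then $K_S^2>0$, so $K_S$ is nef and big and $\kappa(S)=2=\kappa_1(F)+\kappa(C)$. If $\kappa_1(F)+\kappa(C)=1$, it suffices to show $\nu(K_S)\ge1$, i.e. $K_S\not\equiv 0$: pairing with an ample $H$ gives $K_S\cdot H=K_{S/C}\cdot H+(2g(C)-2)(F\cdot H)$, and this is strictly positive because $(2g(C)-2)(F\cdot H)>0$ when $g(C)\ge2$, while when $g(C)=1$ and $p_a(F)\ge2$ one has $K_{S/C}\cdot H>0$ from $K_{S/C}\cdot F=2p_a(F)-2>0$. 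Finally, when $p_a(F)=1$ and $g(C)=1$ the right-hand side is $0$ and the inequality $\kappa(S)\ge 0$ was already recorded.

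The main obstacle, and the reason the statement is genuinely characteristic-$p$ content, lies in the two positivity inputs. The nefness of $\omega_{S/C}$ is the delicate point: on vertical components it follows from adjunction and relative minimality (Zariski's lemma forces $\Gamma^2\le 0$, and the offending $(-1)$-curves have been removed), but controlling horizontal curves, and handling the wild and quasi-elliptic fibres where the Hodge-theoretic semipositivity of $f_*\omega_{S/C}$ in the style of Fujita and Kawamata breaks down, is precisely where characteristic-$p$ care is required; for $p_a(F)=1$ this is supplied by the canonical bundle formula for (quasi-)elliptic fibrations, and for $p_a(F)\ge 2$ by the characteristic-free Arakelov-type argument. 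The second input, the abundance theorem for surfaces in positive characteristic, is indispensable in the boundary case $K_S^2=0$, $K_S\not\equiv 0$, where Riemann--Roch alone gives only $\kappa(S)\ge 0$ and semiampleness is needed to upgrade $\nu(K_S)=1$ to $\kappa(S)=1$.
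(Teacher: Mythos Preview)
Your argument is correct and takes a genuinely different route from the paper. The paper proceeds by a direct case split on $\kappa(S)\in\{-\infty,0,1,2\}$, invoking the Bombieri--Mumford classification in each case (the $\kappa(S)=-\infty$ case uses the ruled-surface structure to force $p_a(F)=0$; the $\kappa(S)=0$ case feeds the canonical bundle formula into a Riemann--Roch computation to force $g(C)=1$; the $\kappa(S)=1$ case analyzes the Iitaka fibration). Your approach instead establishes $K_S$ nef as a sum of the nef classes $\omega_{S/C}$ and $f^*\omega_C$, invokes abundance for surfaces to equate $\kappa(S)$ with the numerical dimension $\nu(K_S)$, and then reads off $\nu(K_S)$ from the intersection numbers $K_S\cdot F$ and $K_S^2$. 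This is a cleaner packaging and makes the role of $p_a(F)$ (rather than $g(\tilde F)$) transparent through $K_{S/C}\cdot F=2p_a(F)-2$. That said, your two black boxes are not lighter than the paper's: abundance for surfaces in characteristic $p$ \emph{is} the Bombieri--Mumford classification, and the nefness of $\omega_{S/C}$ in the $p_a(F)=1$ case does not fall out of the canonical bundle formula alone---one still needs $\deg\mathcal{L}^{-1}=\chi(\mathcal{O}_S)+l(\mathcal{T})\ge 0$, which in positive characteristic is again a classification-level input (exactly the one the paper cites from \cite{bm77} in its $\kappa(S)=0$ case). So the two proofs draw on the same well at comparable depth; yours trades explicit casework for a more uniform numerical argument, while the paper's is more self-contained about where each piece of the classification enters.
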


Theorem \ref{T:C21} is essentially known to experts, as Prof. F. Catanese pointed out. For reader's convenience, we shall give a proof in Section \ref{S:C21}. We mainly apply Bombieri-Mumford's classification of surfaces in positive characteristics. We have a feeling that the inequality (\ref{E:C21}) reflects the geometry better than inequlity (\ref{E:Cnn-1}) (see Remark \ref{R:general_surf}). One difficulty to generalize the inequality (\ref{E:C21}) to higher dimensional cases is the weak positivity of $f_*\omega_{X/Z}^l$ for $l\gg0$, which plays an important role in the proof of the known cases of Conjecture $C_{n,m}$ over $\mathbb{C}$, but the positivity  fails in positive characteristics (see Remark \ref{R:K_not_nef}). The semi-positivity of $f_*\omega_{X/Z}^l$ for $l\gg0$ holds for certain fibrations  due to Patakfalvi (\cite{Pa}).

Let's  explain briefly the idea of  proof of Theorem \ref{T:Cnn-1}. We mainly follow Viehweg's approach (\cite{Vie1}) and study the behavior of the relative canonical sheaves under base changes or alterations. Starting from the fibration $f: X \rightarrow Z$, after a base change $\pi:Z' \rightarrow Z$, where $\pi$ is an alteration and possibly contain a purely inseparable extension,  we obtain an alteration $X'\rightarrow X$ such that, $X'$ and $Z'$ are smooth and the natural fibration $f': X' \rightarrow Z'$ factors through a stable (or 1-pointed stable if $g=1$) fibration  $f'': X''\rightarrow Z'$ with $X''$ having mild singularities (see the commutative diagram (\ref{Diagram}) in Section \ref{S:Stable_Red}). The stableness of the fibration implies that $\kappa(X'',\omega_{X''/Z'})\geq \kappa(\tilde{F})$ (Theorem \ref{ggt2}, \ref{g1}). We can attain the goal by showing $\kappa(X'',\omega_{X''/Z'})\leq\kappa(X',\omega_{X'/Z'})\leq \kappa(X,\omega_{X/Z})$. The first inequality is  by Proposition \ref{pf} since $X''$ has mild singularities. The second one is from carefully comparing $\omega_{X'/Z'}$ and the pull-back of $\omega_{X/Z}$( see Theorem \ref{compds}).

One difficulty to carry Viehweg's proof of $C_{n,n-1}$  into positive characteristics is the lack of  resolution of singularities in positive characteristics. Without resolution, there is no stable reduction by a flat base change as in \cite{Vie1} Theorem 5.1: where Viehweg shows that

1) there is a flat base change $Z'\rightarrow Z$ such that $X\times_ZZ'$ has mild singularities after replacing $f:X\rightarrow Z$ by a birational smooth model

2) the fibration $X\times_ZZ'\rightarrow Z'$ is birational to a stable fibration $f_s:X_s\rightarrow Z'$.

The advantage of flat base change is that the relative canonical sheaves (or relative dualizing sheaves) are compatible with flat base changes. In our proof, we construct a similar commutative diagram (Diagram (\ref{Diagram}) in Section \ref{S:Stable_Red}) by using de Jong's alteration, while the base change $Z'\rightarrow Z$ is not flat.

\smallskip

{\textbf{Acknowledgements.} Part of the work is done during the first author visiting DPMMS at University of Cambridge. The first author should thank Caucher Birkar's invitation,  DPMMS at Cambridge for stimulating research environment, and the financial support from AMSS, Chinese Academy of Sciences. The first author thanks many useful comments and discussions from people in the seminar organized by Birkar. The authors also thank  Yi Gu, Lingguang Li,  Jinsong Xu and Ze Xu for many useful discussions. The authors are grateful to Weizhe Zheng for simplifying the arguments of Theorem \ref{compds} in the early version of the paper. The first author is supported by NSFC (No. 11201454 and No. 11231003). The second author is supported by NSFC (No. 11226075).

\section{Prepliminaries and Notations}

\textbf{Notations and assumptions:}
\begin{itemize}
\item
$D(X)$ (resp. $D^+(X), D^-(X), D^b(X)$): the derived (low bounded, up bounded, bounded) category of quasi-coherent sheaves on a variety $X$;
\item
$\simeq$: quasi-isomorphism between two objects in the derived category;
\item
$Lf^*, Rf_*$: the derived functors of $f^*, f_*$ respectively for a morphism $f: X \rightarrow Y$;
\item
$g(C), p_a(C)$: the geometric genus and the arithmetic genus of the curve $C$.
\end{itemize}

For basic knowledge of derived categories, we refer to \cite{Hu}. All varieties are over an algebraically closed field $k$ of positive characteristic. On a variety $X$, we always identify a sheaf $\mathcal{F}$ with an object in $D(X)$. Let  $L_1,L_2$ be two line bundles on $X$. We denote $L_1\geq L_2$, if  $L_1\otimes L_2^{-1}$ has non-zero global sections.

\subsection{Duality theory} \label{dth}
We list some results on Grothendieck  duality theory, part of which appeared  in \cite{Vie1} \S 6. For details we refer to \cite{Ha} Chap. III Sec. 8, 10, Chap. V Sec. 9 and Chap. VII Sec. 4.

Let $f: X \rightarrow Y$ be a projective morphism between two projective varieties of relative dimension $r = \dim X - \dim Y$. There exists a functor $f^!: D^+(Y) \rightarrow D^+(X)$ such that for $F \in D^-(X)$ and $G \in D^+(Y)$,
$$Rf_*R\mathcal{H}om_X(F, f^!G) \simeq R\mathcal{H}om_Y(Rf_*F, G).$$
Recall that
\begin{itemize}
\item[(a)]{If $g \circ f: X \rightarrow Y \rightarrow Z$ is a composite of projective morphisms, then $(g\circ f)^! \simeq f^! \circ g^!$;}
\item[(b)]{For a flat base change $u: Y' \rightarrow Y$, there is an isomorphism $v^*f^! = g^!u^*$ where $v$ and $g$ are the two projections of $X\times_Y Y'$;}
\item[(c)]{If $G \in D^b(Y)$ is an object of finite Tor-dimension (\cite{Ha}; p. 97), there is a functorial isomorphism  $ f^!F \mathop{\otimes}^L Lf^*G \simeq f^!(F \otimes^L G)$ for $F \in D^+(Y)$ (\cite{Ha} p. 194);}
\item[(d)] We call the bounded complex {$\mathcal{K}_{X/Y}:=f^!\mathcal{O}_Y$ the \emph{relative dualizing complex}, and $\omega^o_{X/Y}:= \mathcal{H}^0( \mathcal{K}_{X/Y}[-r])$ the \emph{relative dualizing sheaf}. In particular if $Y = \text{Spec }k$, then the relative dualizing complex  is called the \emph{dualizing complex}, denoted by $\mathcal{K}_{X}$; and $\omega^o_{X}:= \mathcal{H}^0( \mathcal{K}_{X}[-r])$ is called the \emph{dualizing sheaf};}
\item[(e)]{If $f$ is a Cohen-Macaulay morphism (\cite{Ha} p. 298), i.e., $f$ is flat and all the fibers are Cohen-Macaulay, then $f^!\mathcal{O}_Y\simeq \omega^o_{X/Y}[r]$ for a quasi-coherent sheaf $\omega^o_{X/Y}$, and $\omega^o_{X/Y}$ is compatible with base change (\cite{Ha} p. 388). If moreover $f$ is a Gorenstein morphism, i.e. $f$ is flat and all the fibers are Gorenstein, then $\omega^o_{X/Y}$ is an invertible sheaf (\cite{Ha} p. 298).}
\end{itemize}

\begin{Proposition}[\cite{Vie1} Lemma 6.4]\label{pa} Let $h:X\rightarrow S$ be a projective Cohen-Macaulay morphism, $g:Y\rightarrow S$ a projective Gorenstein morphism between quasi-projective varieties.
Let $f: X \rightarrow Y$ be a  projective morphism over $S$.
Then $f^!\mathcal{O}_Y\simeq \omega_{X/Y}^o[r]$ and $\omega^o_{X/Y} \simeq \omega_{X/S}^o\otimes  (f^*\omega^o_{Y/S})^{-1}$ where $r = \dim X - \dim Y$.
\end{Proposition}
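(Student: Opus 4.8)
The plan is to realize $f^!\mathcal{O}_Y$ through the factorization $h=g\circ f$ and the composition law (a), reducing the whole computation to the two Cohen--Macaulay/Gorenstein identities recorded in (e). First I would use the hypotheses on $h$ and $g$ separately. Since $h:X\rightarrow S$ is Cohen--Macaulay, property (e) gives
\[
h^!\mathcal{O}_S \simeq \omega^o_{X/S}[\dim X-\dim S],
\]
a single sheaf placed in one cohomological degree. Since $g:Y\rightarrow S$ is Gorenstein, property (e) gives
\[
g^!\mathcal{O}_S \simeq \omega^o_{Y/S}[\dim Y-\dim S],
\]
where $\omega^o_{Y/S}$ is now an \emph{invertible} sheaf; this invertibility is the crucial consequence of the Gorenstein hypothesis that the rest of the argument exploits.

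Next I would splice these together via the composition isomorphism (a). Because $h=g\circ f$, we have $h^!\mathcal{O}_S\simeq f^!\bigl(g^!\mathcal{O}_S\bigr)\simeq f^!\bigl(\omega^o_{Y/S}[\dim Y-\dim S]\bigr)$. As $\omega^o_{Y/S}$ is invertible, the shifted complex $\omega^o_{Y/S}[\dim Y-\dim S]$ has finite Tor-dimension, so property (c) applies with $F=\mathcal{O}_Y$ and $G=\omega^o_{Y/S}[\dim Y-\dim S]$, allowing me to pull the line bundle out of $f^!$:
\[
f^!\bigl(\omega^o_{Y/S}[\dim Y-\dim S]\bigr)\simeq f^!\mathcal{O}_Y \mathop{\otimes}^L Lf^*\bigl(\omega^o_{Y/S}[\dim Y-\dim S]\bigr).
\]
Since $\omega^o_{Y/S}$ is locally free, the derived pullback is the ordinary one, $Lf^*\omega^o_{Y/S}=f^*\omega^o_{Y/S}$.

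Comparing the two resulting expressions for $h^!\mathcal{O}_S$ and cancelling the shift, using that $(\dim X-\dim S)-(\dim Y-\dim S)=\dim X-\dim Y=r$, yields
\[
f^!\mathcal{O}_Y \simeq \bigl(\omega^o_{X/S}\otimes (f^*\omega^o_{Y/S})^{-1}\bigr)[r].
\]
The right-hand side is a single sheaf concentrated in degree $-r$, so applying the definition (d) of the relative dualizing sheaf, $\omega^o_{X/Y}=\mathcal{H}^0(f^!\mathcal{O}_Y[-r])$, recovers exactly that sheaf. This simultaneously proves $f^!\mathcal{O}_Y\simeq\omega^o_{X/Y}[r]$ and the formula $\omega^o_{X/Y}\simeq\omega^o_{X/S}\otimes(f^*\omega^o_{Y/S})^{-1}$.

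The step I expect to require the most care is the application of property (c): it is valid only when $G$ has finite Tor-dimension, and this is precisely what the Gorenstein hypothesis on $g$ buys through the invertibility of $\omega^o_{Y/S}$. Were $g$ merely Cohen--Macaulay, $\omega^o_{Y/S}$ could fail to be invertible and the clean tensor identity, along with $Lf^*=f^*$, would break down. The remaining obstacle is purely bookkeeping: keeping the cohomological shifts consistent throughout and confirming that the final complex is concentrated in a single degree, so that the derived isomorphism descends to the asserted isomorphism of sheaves.
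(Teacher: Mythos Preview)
Your argument is correct and is essentially identical to the paper's proof: both apply (e) to $h$ and $g$, then use the composition law (a) together with the tensor identity (c) for the invertible sheaf $\omega^o_{Y/S}$ to rewrite $h^!\mathcal{O}_S\simeq f^!\mathcal{O}_Y\otimes f^*\omega^o_{Y/S}[m]$ and read off $f^!\mathcal{O}_Y\simeq\omega^o_{X/S}\otimes(f^*\omega^o_{Y/S})^{-1}[r]$. The only difference is that you spell out the bookkeeping with shifts and the role of the Gorenstein hypothesis more explicitly.
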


\begin{proof}
Let $n=\dim X/S$ and $m=\dim Y/S$. By (e), $\mathcal{K}_{X/S} \simeq \omega^o_{X/S}[n]$ and $\mathcal{K}_{Y/S} \simeq \omega^o_{Y/S}[m]$ where $\omega^o_{X/S}$ is a sheaf and $\omega^o_{Y/S}$ is an invertible sheaf. By (a) and (c), we have
$$\mathcal{K}_{X/S} \simeq f^!\mathcal{K}_{Y/S} \simeq f^!(\mathcal{O}_{Y} \otimes^L \omega^o_{Y/S}[m]) \simeq f^!\mathcal{O}_{Y} \otimes^L f^*\omega^o_{Y/S}[m].$$
So $f^!\mathcal{O}_Y \simeq \omega^o_{X/S} \otimes  (f^*\omega^o_{Y/S})^{-1}[r]$.
\end{proof}

\begin{Proposition}[\cite{Vie1} Corollary 6.5]\label{pf}
Let $X/S$ and $Y/S$ be two projective varieties over $S$ of the same relative dimension $r$, and $f: X/S \rightarrow Y/S$ a projective morphism over $S$. Assume that $\mathcal{K}_{X/S} \simeq \omega^o_{X/S}[r]$, $\mathcal{K}_{Y/S} \simeq \omega^o_{Y/S}[r]$, both $\omega^o_{X/S}$ and $\omega^o_{Y/S}$ are invertible sheaves, and $Rf_*\mathcal{O}_X \simeq \mathcal{O}_Y$. Then there is a natural injection $f^*\omega^o_{Y/S} \rightarrow \omega^o_{X/S}$.
\end{Proposition}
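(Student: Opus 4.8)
The plan is to identify $f^!\mathcal{O}_Y$ explicitly, extract the natural map from Grothendieck duality, and finish with a torsion-freeness argument. First, note that since $X/S$ and $Y/S$ share the relative dimension $r$, the morphism $f$ itself has relative dimension $\dim X-\dim Y=0$. Writing $g\colon Y\to S$ and $h=g\circ f\colon X\to S$, property (a) gives $\mathcal{K}_{X/S}\simeq f^!\mathcal{K}_{Y/S}$. I would then substitute the hypothesis $\mathcal{K}_{Y/S}\simeq\omega^o_{Y/S}[r]$ and apply property (c): this is legitimate because $\omega^o_{Y/S}$ is invertible, hence of finite Tor-dimension, with $Lf^*\omega^o_{Y/S}=f^*\omega^o_{Y/S}$. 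The outcome is $\omega^o_{X/S}[r]\simeq f^!\mathcal{O}_Y\otimes^L f^*\omega^o_{Y/S}[r]$; untwisting by the line bundle $f^*\omega^o_{Y/S}$ and shifting by $[-r]$ yields $f^!\mathcal{O}_Y\simeq\omega^o_{X/S}\otimes(f^*\omega^o_{Y/S})^{-1}$, which, as a tensor product of two line bundles, is an invertible sheaf placed in degree $0$.

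Next I would produce the map. Applying the duality isomorphism with $F=\mathcal{O}_X$, $G=\mathcal{O}_Y$ and passing to $H^0R\Gamma$ gives the adjunction $\operatorname{Hom}_{D(X)}(\mathcal{O}_X,f^!\mathcal{O}_Y)\cong\operatorname{Hom}_{D(Y)}(Rf_*\mathcal{O}_X,\mathcal{O}_Y)$. Using the hypothesis $Rf_*\mathcal{O}_X\simeq\mathcal{O}_Y$, the right-hand group contains $\operatorname{id}_{\mathcal{O}_Y}$, and I let $c\colon\mathcal{O}_X\to f^!\mathcal{O}_Y$ be the corresponding map. Because the first step shows $f^!\mathcal{O}_Y$ is a sheaf concentrated in degree $0$, this $c$ is an honest morphism of coherent sheaves $\mathcal{O}_X\to\omega^o_{X/S}\otimes(f^*\omega^o_{Y/S})^{-1}$, and it is nonzero since the adjunction is an isomorphism carrying the nonzero element $\operatorname{id}_{\mathcal{O}_Y}$ to it. Twisting $c$ by the line bundle $f^*\omega^o_{Y/S}$ then produces the claimed natural morphism $f^*\omega^o_{Y/S}\to\omega^o_{X/S}$.

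Finally, for injectivity: both sides are line bundles on the integral scheme $X$, so the morphism is multiplication by a global section $s$; on an integral scheme each stalk of $\mathcal{O}_X$ embeds into the function field $K(X)$, and a nonzero section of a line bundle has nonzero image there, so multiplication by $s$ is injective on every stalk as soon as $s\neq0$, which is exactly the nonvanishing of $c$. The part I expect to require the most care is the second step: ensuring that the purely derived-categorical map $c$ is both representable as a genuine sheaf homomorphism and nonzero. Both points rest on the degree-$0$, invertible identification of $f^!\mathcal{O}_Y$ from the first step (which collapses $\operatorname{Hom}_{D(X)}$ to $\operatorname{Hom}$ of sheaves) together with the connectedness hypothesis $Rf_*\mathcal{O}_X\simeq\mathcal{O}_Y$ (which pins $c$ to the identity); once these are secured, the injectivity argument is routine.
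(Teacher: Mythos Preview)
Your proof is correct and follows essentially the same route as the paper's: identify $f^!\mathcal{O}_Y\simeq\omega^o_{X/S}\otimes(f^*\omega^o_{Y/S})^{-1}$ via properties (a) and (c) (the paper packages this as an appeal to Proposition~\ref{pa}), then use Grothendieck duality with $F=\mathcal{O}_X$, $G=\mathcal{O}_Y$ and the hypothesis $Rf_*\mathcal{O}_X\simeq\mathcal{O}_Y$ to produce the section corresponding to $\mathrm{id}_{\mathcal{O}_Y}$. Your write-up is simply more explicit about why the derived $\mathrm{Hom}$ collapses to sheaf $\mathrm{Hom}$ and why nonzero implies injective, points the paper leaves implicit.
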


\begin{proof} The duality isomorphism gives $\text{Ext}_X^i(F,f^{!}G)\rightarrow \text{Ext}^i_Y(Rf_*F,G)$ (\cite{Ha}, p. 210).  For $i=0$, one has $$\text{Hom}_X(\mathcal{O}_X,f^!\mathcal{O}_Y)\cong\text{Hom}_Y(Rf_*\mathcal{O}_X,\mathcal{O}_Y)\cong \text{Hom}_Y(\mathcal{O}_Y,\mathcal{O}_Y).$$
By the assumption and Proposition \ref{pa}, we have $f^{!}\mathcal{O}_Y\simeq \omega_{X/S}^o\otimes(f^*\omega_{Y/S}^o)$. Therefore there is an injection $\mathcal{O}_X\rightarrow f^!\mathcal{O}_Y\simeq\omega_{X/S}^o\otimes (f^*\omega_{Y/S}^o)^{-1}$, corresponding to $1\in \text{Hom}_Y(\mathcal{O}_Y,\mathcal{O}_Y)$ by the above Proposition.
\end{proof}

\begin{Proposition}\label{pb}
Let $f: X \rightarrow S$ and $u: S' \rightarrow S$ be two projective morphisms between projective Gorenstein varieties. Considering the base change
\[\begin{CD}
X' = X \times_S S'     @>v >>          X \\
@Vf'VV       @V fVV    \\
S' @> u >>        S
\end{CD} \]
If either $u$ or $f$ is flat, then $f'^{!}\mathcal{O}_{S'}\simeq \omega_{X'/S'}^o[r]$ where $r=\dim X-\dim S$ and $\omega_{X'/S'}^o \cong v^*\omega_{X/S}^o$.
\end{Proposition}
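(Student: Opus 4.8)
The plan is to reduce the base-change computation to the absolute dualizing complexes, which are well-controlled since $X$, $S$, $S'$ are all projective Gorenstein, and then to split according to which of $u$, $f$ is flat. First I would record that for a projective Gorenstein variety $V$ one has $\mathcal{K}_V\simeq\omega^o_V[\dim V]$ with $\omega^o_V$ invertible; applied to $X,S,S'$ this produces invertible sheaves $\omega^o_X,\omega^o_S,\omega^o_{S'}$. Writing $a\colon X\to\operatorname{Spec}k$ and $b\colon S\to\operatorname{Spec}k$ for the structure morphisms, property (a) gives $\mathcal{K}_X\simeq f^!\mathcal{K}_S$, and since $\omega^o_S$ is invertible (hence of finite Tor-dimension) property (c) yields $\mathcal{K}_X\simeq f^!\mathcal{O}_S\otimes^L f^*\omega^o_S[\dim S]$. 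Comparing this with $\mathcal{K}_X\simeq\omega^o_X[\dim X]$, exactly as in the computation of Proposition \ref{pa}, I obtain
$$f^!\mathcal{O}_S\simeq\omega^o_{X/S}[r],\qquad \omega^o_{X/S}:=\omega^o_X\otimes(f^*\omega^o_S)^{-1},$$
with $\omega^o_{X/S}$ an invertible sheaf and $r=\dim X-\dim S$.

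If $u$ is flat, I would simply invoke the flat base-change isomorphism (b), namely $v^*f^!\simeq f'^!u^*$, and evaluate it at $\mathcal{O}_S$. Since $u^*\mathcal{O}_S=\mathcal{O}_{S'}$, this gives $f'^!\mathcal{O}_{S'}\simeq v^*(f^!\mathcal{O}_S)\simeq (v^*\omega^o_{X/S})[r]$. Because $v^*\omega^o_{X/S}$ is invertible, the relative dualizing complex $\mathcal{K}_{X'/S'}$ is concentrated in a single degree, so by definition (d) one has $\omega^o_{X'/S'}=\mathcal{H}^0(\mathcal{K}_{X'/S'}[-r])\cong v^*\omega^o_{X/S}$, as required. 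Flatness of $v$ (the base change of the flat $u$) also ensures $\dim X'-\dim S'=r$, so the shift is the correct one.

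If instead $f$ is flat, I would argue that $f$ is a Gorenstein morphism: flatness of $f$ together with $X$ and $S$ Cohen-Macaulay forces the fibers to be Cohen-Macaulay, and the relative dualizing sheaf $\omega^o_{X/S}$ computed above is invertible, so the fibers are in fact Gorenstein. Property (e) then applies, giving both $f^!\mathcal{O}_S\simeq\omega^o_{X/S}[r]$ and, crucially, compatibility of $\omega^o_{X/S}$ with arbitrary base change. Since $f'$ is again flat with the same (Gorenstein) fibers, it is a Gorenstein morphism of relative dimension $r$; hence $f'^!\mathcal{O}_{S'}\simeq\omega^o_{X'/S'}[r]$, and the base-change statement in (e) yields $\omega^o_{X'/S'}\cong v^*\omega^o_{X/S}$.

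The step I expect to be the main obstacle is the flat-$f$ case: one must make sure the hypotheses genuinely place $f$ inside the scope of property (e), that is, that flatness of $f$ with $X,S$ Gorenstein really does force the fibers to be Gorenstein and $\omega^o_{X/S}$ to be invertible, and that the base-change assertion of (e) covers the present $u$, which need not be flat. Verifying this fiberwise Gorenstein property and the arbitrary-base-change compatibility is where the care is needed; by contrast the flat-$u$ case is essentially immediate from (b).
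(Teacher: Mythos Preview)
Your proposal is correct and follows essentially the same two-case strategy as the paper: invoke flat base change (b) when $u$ is flat, and invoke (e) when $f$ is flat after checking that $f$ is a Gorenstein morphism. The only minor difference is that the paper obtains Gorenstein fibers directly from the fact that a flat morphism between Gorenstein schemes has Gorenstein fibers (\cite{Ha} Prop.~9.6), whereas you deduce it via Cohen--Macaulay fibers plus invertibility of $\omega^o_{X/S}$; both are valid, and your concern about the scope of (e) is unfounded since (e) already grants arbitrary base-change compatibility for Cohen--Macaulay morphisms.
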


\begin{proof}
If $u$ is flat, then the assertion follows from (b). If $f$ is flat, since both $X$ and $S$ are Gorenstein, all the fibers of $f$ are Gorenstein (\cite{Ha}  Prop. 9.6).  The assertion follows from (e).
\end{proof}

We study the behavior of the relative dualizing sheaf under a non-flat base change.

\begin{Theorem}\label{compds}
Let $f: X \rightarrow Z$ be a fibration of relative dimension $r$ and $\pi: Z' \rightarrow Z$ a generically finite surjective morphism, where $X,Z,Z'$ are projective varieties over $k$. Let $\sigma: X' \rightarrow \bar{X}':=X\times_Z Z'$ be a morphism mapping $X'$ birationally onto the component of $\bar{X}'$ dominating over $X$ (namely, the strict transform, see Definition \ref{D:Strict_T}). Fix these morphisms into the following commutative diagram
\[\begin{CD}
X' @>\sigma>> \bar{X}' = X \times_Z Z'     @>\pi_1 >>          X \\
@V hVV    @V g'VV       @V fVV    \\
Z' @>id>> Z' @> \pi >>        Z
\end{CD} \]
Assume that $f^!\mathcal{O}_{Z} \simeq \omega^o_{X/Z}[r]$, $h^!\mathcal{O}_{Z'} \simeq \omega^o_{X'/Z'}[r]$, and $\omega^o_{X/Z}, \omega^o_{X'/Z'}$ are invertible sheaves on $X$ and $X'$ respectively. Then there exists an effective $\sigma$-exceptional divisor $E$ on $X'$ such that
$$\omega^o_{X'/Z'} \leq \sigma^*\pi_1^*\omega^o_{X/Z} +E.$$
\end{Theorem}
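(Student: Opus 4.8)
The plan is to exploit the two factorizations of the composite $F:=f\circ\rho=\pi\circ h\colon X'\to Z$, where $\rho:=\pi_1\circ\sigma$, and to compare the relative dualizing data of $f$ and of $h$ on $X'$ by Grothendieck duality, arranging matters so that the entire discrepancy is carried by the birational morphism $\sigma$. I will use throughout that $f$ and $h$ are flat: the hypotheses $f^!\mathcal O_Z\simeq\omega^o_{X/Z}[r]$ and $h^!\mathcal O_{Z'}\simeq\omega^o_{X'/Z'}[r]$ with $\omega^o_{X/Z},\omega^o_{X'/Z'}$ invertible say that $f$ and $h$ are Gorenstein (in particular Cohen--Macaulay, hence flat) morphisms, which is exactly what makes the base-change property (b) applicable below.

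First I would compute $F^!\mathcal O_Z$ in two ways. From $F=f\circ\rho$, properties (a) and (c), and invertibility of $\omega^o_{X/Z}$, one gets $F^!\mathcal O_Z\simeq\rho^!\mathcal O_X\otimes^L\rho^*\omega^o_{X/Z}[r]$, while from $F=\pi\circ h$ and the hypothesis on $h$ one gets $F^!\mathcal O_Z\simeq h^!\pi^!\mathcal O_Z\simeq\omega^o_{X'/Z'}[r]\otimes^L h^*\pi^!\mathcal O_Z$. Next I would feed the base change into $\rho^!\mathcal O_X$: since $f$ is flat, property (b) applied to the base change of $\pi$ along $f$ gives $\pi_1^!\mathcal O_X\simeq g'^*\pi^!\mathcal O_Z$, whence, using (a), (c) and $g'\circ\sigma=h$,
\[
\rho^!\mathcal O_X\simeq\sigma^!\pi_1^!\mathcal O_X\simeq\sigma^!\bigl(g'^*\pi^!\mathcal O_Z\bigr)\simeq\sigma^!\mathcal O_{\bar X'}\otimes^L h^*\pi^!\mathcal O_Z.
\]
Equating the two expressions for $F^!\mathcal O_Z$ yields the key relation
\[
\omega^o_{X'/Z'}[r]\otimes^L h^*\pi^!\mathcal O_Z\;\simeq\;\sigma^!\mathcal O_{\bar X'}\otimes^L h^*\pi^!\mathcal O_Z\otimes^L\rho^*\omega^o_{X/Z}[r].
\]

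The heart of the argument is then the cancellation of the common factor $h^*\pi^!\mathcal O_Z$. Since $\pi$ is generically finite, $\pi^!\mathcal O_Z$ is generically a line bundle placed in degree zero; taking $\mathcal H^0(-[-r])$ and passing to reflexive hulls on the smooth variety $X'$ (on which $\omega^o_{X'/Z'}$ and $\rho^*\omega^o_{X/Z}$ are already line bundles) cancels the rank-one factor $(h^*\pi^!\mathcal O_Z)^{\vee\vee}$ and leaves the identity of line bundles
\[
\omega^o_{X'/Z'}\;\cong\;\rho^*\omega^o_{X/Z}\otimes\mathcal H^0(\sigma^!\mathcal O_{\bar X'})^{\vee\vee}.
\]
I then set $\mathcal O_{X'}(E):=\mathcal H^0(\sigma^!\mathcal O_{\bar X'})^{\vee\vee}$. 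The point of the reflexive-hull cancellation is that the entire ramification and contraction contribution of $\pi$ disappears, so that $E$ is governed solely by $\sigma$.

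It remains to check that $E$ is effective and $\sigma$-exceptional. Over $U'=X'\setminus\operatorname{Exc}(\sigma)$ the morphism $\sigma$ is an isomorphism onto an open subset of the strict transform, so $\sigma^!\mathcal O_{\bar X'}\simeq\mathcal O_{X'}$ there and $E|_{U'}=0$; hence $\operatorname{Supp}(E)\subseteq\operatorname{Exc}(\sigma)$, i.e. $E$ is $\sigma$-exceptional. For effectivity I would produce the fundamental class of the generically-isomorphic $\sigma$: the natural map $\mathcal O_{\bar X'}\to R\sigma_*\mathcal O_{X'}$ corresponds, under the duality isomorphism $\operatorname{Hom}_{X'}(\mathcal O_{X'},\sigma^!\mathcal O_{\bar X'})\cong\operatorname{Hom}_{\bar X'}(R\sigma_*\mathcal O_{X'},\mathcal O_{\bar X'})$ used in Proposition \ref{pf}, to a morphism $\mathcal O_{X'}\to\sigma^!\mathcal O_{\bar X'}$ that is the identity on $U'$, hence a nowhere-zero rational section giving $\mathcal O_{X'}\hookrightarrow\mathcal O_{X'}(E)$ and therefore $E\ge 0$. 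This yields $\omega^o_{X'/Z'}\leq\sigma^*\pi_1^*\omega^o_{X/Z}+E$ with $E$ effective and $\sigma$-exceptional. I expect the main obstacle to be precisely the cancellation in the third paragraph: because $\pi$ is only generically finite (not flat), $\pi^!\mathcal O_Z$ is a genuine complex over the locus where $\pi$ is not flat, so it cannot be inverted directly, and one must argue via the fundamental class of $\sigma$ together with reflexive hulls that the residual term depends only on $\sigma$ and in particular contributes no non-$\sigma$-exceptional divisor. A secondary technical point is that the strict transform $\bar X'$ may be singular or non-normal, so the fundamental class must be obtained from duality as above rather than from naive adjunction.
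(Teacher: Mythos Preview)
Your opening assertion is the fatal gap: the hypotheses $f^!\mathcal O_Z\simeq\omega^o_{X/Z}[r]$ and $h^!\mathcal O_{Z'}\simeq\omega^o_{X'/Z'}[r]$ with invertible $\omega^o$ do \emph{not} force $f$ and $h$ to be Gorenstein (hence flat) morphisms. Whenever $X$ and $Z$ are themselves Gorenstein varieties---for instance smooth, exactly the situation in the intended application---one has $f^!\mathcal O_Z\simeq\omega_X\otimes f^*\omega_Z^{-1}[\dim X-\dim Z]$ for \emph{any} projective $f$, flat or not; a blow-up already gives a counterexample. In the paper's diagram~(\ref{Diagram}) the map $h=f'\colon X'\to Z'$ is a resolution of a nodal family and typically contracts divisors, so it is genuinely non-flat. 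Without flatness of $f$ you cannot invoke property~(b) to obtain $\pi_1^!\mathcal O_X\simeq g'^{*}\pi^!\mathcal O_Z$, and the chain of quasi-isomorphisms on which your argument rests collapses at this point.

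Even granting flatness, two further steps are unjustified. Your use of property~(c) to pull $g'^*\pi^!\mathcal O_Z$ out of $\sigma^!$ requires that object to have finite Tor-dimension on $\bar X'$, which is not automatic when $Z'$ or $\bar X'$ is singular. And the ``cancellation'' of the common factor $h^*\pi^!\mathcal O_Z$ is not a legitimate derived-category operation: over the non-flat locus of $\pi$ this is a genuine complex, and neither $\mathcal H^0$ nor the reflexive hull commutes with $\otimes^L$, so there is no reason the residual line bundle should be trivial off $\mathrm{Exc}(\sigma)$ rather than along all of $h^{-1}(\text{non-flat locus of }\pi)$. The paper sidesteps all of this by never seeking an isomorphism. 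It factors $f$ through a smooth projective bundle $p\colon P\to Z$ (where flat base change is legitimately available on $P'$), constructs only a base-change \emph{morphism} $\beta\colon g'^!\mathcal O_{Z'}\to L\pi_1^*f^!\mathcal O_Z$ that is an isomorphism merely over the flat locus, composes it with the trace map for $\sigma$, passes to $\mathcal H^0$, and then extracts the inequality from the resulting sheaf map $\sigma_*\omega^o_{X'/Z'}\to\pi_1^*\omega^o_{X/Z}$ together with the fact that the cokernel of $\sigma^*\sigma_*\omega^o_{X'/Z'}\to\omega^o_{X'/Z'}$ is supported on $\mathrm{Exc}(\sigma)$.
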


\begin{proof} The projective morphism $f:X\rightarrow Z$ can be factored through a closed imbedding $i:X\rightarrow P$ and a smooth morphism $
p:P\rightarrow Z$, such that $f=pi$. Let $P'=P\times_ZZ'$ be the base change.  Consider the following commutative diagram
$$\xymatrix{X'\ar[ddr]_h\ar[r]^{\sigma}&\bar{X}'\ar[r]^{\pi_1}\ar[d]^j&X\ar[d]^i\\
&P'\ar[r]^{\pi_2}\ar[d]^{p'}&P\ar[d]^p\\
&Z'\ar[r]^{\pi}&Z}$$
where $g'=p'j$.

We obtain the morphisms $$\xymatrix{R\sigma_*h^{!}\mathcal{O}_{Z'}\ar[r]^{\alpha}&g'^{!}\mathcal{O}_{Z'}\ar[r]^{\beta}&L\pi^*_1f^{!}\mathcal{O}_Z},\eqno{\spadesuit}$$
where $\alpha$ is induced by trace map and $\beta$ is an isomorphism on a nonempty open subset of the strict transform of $X$ in $\bar{X}'$.

Indeed, since $h=g'\sigma$, we have $h^{!}\simeq \sigma^{!}g'^{!}$. The morphism $\alpha$ is simply the trace map $R\sigma_*\sigma^{!}\rightarrow \text{id}_{D^+(\bar{X}')}$ applied to $g'^{!}\mathcal{O}_{Z'}$.

The morphism $\beta$ is a base change map, obtained by applying $j^{-1}$ to the morphism $$\begin{array}{rcl}j_*g'^{!}L\pi^*\mathcal{O}_Z&\simeq& j_*j^{!}p'^{!}L\pi^*\mathcal{O}_Z\simeq R\mathcal{H}om_{\mathcal{O}_{P'}}(j_*\mathcal{O}_{X}',p'^{!}L\pi^*\mathcal{O}_Z)\\
&\simeq& R\mathcal{H}om_{\mathcal{O}_{P'}}(j_*\pi_1^*\mathcal{O}_X,p'^*L\pi^*\mathcal{O}_Z\otimes \omega^o_{P'/Z'}[d])\\
&\xrightarrow{\gamma}& R\mathcal{H}om_{\mathcal{O}_{P'}}(L\pi^*_2i_*\mathcal{O}_X,
p'^*L\pi^*\mathcal{O}_Z\otimes \omega^o_{P'/Z'}[d])\\
&\simeq& R\mathcal{H}om_{\mathcal{O}_{P'}}(L\pi^*_2i_*\mathcal{O}_X,L\pi_2^*(p^*\mathcal{O}_Z\otimes
\omega^o_{P/Z}[d]))\\
&\simeq& L\pi_2^*R\mathcal{H}om_{\mathcal{O}_{P}}(i_*\mathcal{O}_X,p^*\mathcal{O}_Z\otimes\omega^o_{P/Z}[d])\\
&\simeq& L\pi_2^*i_*i^{!}p^!\mathcal{O}_Z\xrightarrow{\delta}j_*L\pi_1^*i^{!}p^{!}\mathcal{O}_Z\\
&\simeq& j_*L\pi_1^*f^!\mathcal{O}_Z,
\end{array}$$
where $d=\dim P/Z$, $\gamma$ and $\delta$ are both given by the base change map $L\pi_2^*i_*\rightarrow j_*L\pi_1^*$. If $V$ is an open subvariety of $P'$ on which $\pi_2$ is flat, then $\beta|_{j^{-1}(V)}$ is an isomorphism.

By taking 0\textsuperscript{th} cohomology of $\spadesuit$, we get a homomorphism of sheaves
$$\sigma_*\omega^o_{X'/Z'} \rightarrow \mathcal{H}^0(g'^!\mathcal{O}_{Z'}[-r])\rightarrow \pi_1^*\omega^o_{X/Z}$$
and its pull-back homomorphism
$$\heartsuit: \sigma^*\sigma_*\omega^o_{X'/Z'} \rightarrow \sigma^*\pi_1^*\omega^o_{X/Z}.$$

The natural homomorphism $\lambda: \sigma^*\sigma_*\omega^o_{X'/Z'} \rightarrow \omega^o_{X'/Z'}$ is surjective outside the $\sigma$-exceptional locus, and since $\omega^o_{X'/Z'}$ is a line bundle, we can write the image of $\lambda$ as $I\otimes \omega^o_{X'/Z'}$ where $I\subset \mathcal{O}_{X'}$ is an ideal. The subvariety defined by $I$ is contained in the exceptional locus of $\sigma$. Observing that $\ker\lambda$ is torsion and thus $\text{Hom}_{X'}(\ker\lambda, \sigma^*\pi_1^*\omega^o_{X/Z}) = 0$, applying $\text{Hom}_{X'}(-, \sigma^*\pi_1^*\omega^o_{X/Z})$ to the following exact sequence
$$0\rightarrow \ker\lambda \rightarrow \sigma^*\sigma_*\omega^o_{X'/Z'} \rightarrow I\otimes \omega^o_{X'/Z'} \rightarrow 0$$
we deduce that the homomorphism $\heartsuit: \sigma^*\sigma_*\omega^o_{X'/Z'} \rightarrow \sigma^*\pi_1^*\omega^o_{X/Z}$ factors through a homomorphism $I\otimes \omega^o_{X'/Z'} \rightarrow \sigma^*\pi_1^*\omega^o_{X/Z}$. We conclude that there exists an effective $\sigma$-exceptional divisor $E$ such that $\omega^o_{X'/Z'} \leq \sigma^*\pi_1^*\omega^o_{X/Z} + E$.
\end{proof}

\subsection{Canonical sheaves}
It is known that the canonical sheaf $\omega_X$ coincides with the dualizing sheaf for a smooth projective variety $X$. For a normal quasi-projective variety $X$, we define the canonical sheaf $\omega_X:=i_*\omega_{X^{sm}}$ where $i:X^{sm} \hookrightarrow X$ is the inclusion of the smooth locus. If $X$ is a normal projective Gorenstein variety, then $\omega_X$ is an invertible sheaf coincides with the dualizing sheaf $\omega_X^o$.

Let $f: X\rightarrow Y$ be a morphism  between two projective varieties over $k$ of relative dimension $r$. We define the \emph{relative canonical sheaf} $\omega_{X/Y}$ as
\begin{itemize}
\item
the relative dualizing sheaf  $\omega^o_{X/Y}$, if $Y$ is Gorenstein and $X$ is Cohen-Macaulay, or if $f$ is a Cohen-Macaulay morphism;
\item
$\omega_X^o \otimes (f^*\omega^o_Y)^{-1}$, if $Y$ is Gorenstein and $X$ is normal.
\end{itemize}
The two definitions are compatible, and  we have $f^!\mathcal{O}_Y[-r] \cong \omega_{X/Y}$ in the first case.

\subsection{Kodaira dimension}

In \cite{Iit}, Iitaka defines $D$-dimension $\kappa(X,D)$ for a Cartier divisor $D$ on a complete normal variety $X$. If $X$ is a smooth projective variety over $k$, the Kodaira dimension $\kappa(X):=\kappa(X,\omega_X)$, where $\omega_X$ is the canonical sheaf of $X$.

\begin{Definition}[\cite{ue} Chap II, Definition 6.5,  \cite{la} Example 2.1.5] \label{D:Kod}
Let $X$ be a singular projective variety over $k$. We say $X'$  a \emph{smooth model} of $X$, if $X'$ is smooth projective over $k$ and $X'$ is birational to $X$. If $X$ has a smooth model $X'$, then the Kodaira dimension $\kappa(X)$ is defined by $\kappa(X):=\kappa(X')$.
\end{Definition}

\begin{Remark}
1) Resolution of singularities in a positive characteristic field hasn't been settled yet, so the existence of a smooth model is not clear. However, if $\dim X=1$ or $2$, smooth models of $X$ always exist over $k$.

2) If smooth models of $X$ exist, then the definition of Kodaira dimension is independent of choice of the smooth models.
\end{Remark}

The other way to define Kodaira dimension is to use dualizing sheaf (\cite{ha77} Chap III, Section 7).

\begin{Definition} \label{D:Kod1}
 Let $X$ be a projective variety over $k$ with the invertible dualizing sheaf $\omega_X^o$. Then $\kappa_1(X):=\kappa(X,\omega_X^o)$.
\end{Definition}

\begin{Remark}
1) For a fibration $f:X\rightarrow Z$ between smooth projective varieties over $k$,  the dualizing sheaf of a general fiber and the generic fiber is a line bundle.

2)  If $X$ is smooth, then the dualizing sheaf coincides with the canonical sheaf, thus $\kappa_1(X) = \kappa(X)$.
\end{Remark}

\begin{Example} Let $C'$ be a cuspidal projective curve in $\mathbb{P}^2_k$. Then $\kappa(C')=-\infty$ and $\kappa_1(C')=0$. For a  projective curve $C$ over $k$,   $\kappa_1(C)\geq \kappa(C)$.
\end{Example}

\subsection{Covering Theorem} The following theorem is needed in the sequel.
\begin{Theorem}[\cite{Iit} Theorem 10.5]\label{cl}
Let $f: X \rightarrow Y$ be a proper surjective morphism between smooth complete varieties over $k$. If $D$ is a Cartier divisor on $Y$ and $E$ an effective $f$-exceptional divisor on $X$. Then
$$\kappa(X,f^*D + E) = \kappa(Y,D).$$
\end{Theorem}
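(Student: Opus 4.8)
The plan is to prove the two inequalities separately: the lower bound $\kappa(X,f^*D+E)\geq\kappa(Y,D)$ is immediate, and the content lies in the reverse inequality, which I would obtain by first removing $E$ using its exceptionality and then proving the pure covering statement $\kappa(X,f^*D)=\kappa(Y,D)$. For the lower bound, since $E\geq 0$ its defining section gives an inclusion $\mathcal{O}_X(mf^*D)\hookrightarrow\mathcal{O}_X(m(f^*D+E))$ for every $m\geq 0$, hence an injection on global sections, while $f$ being dominant makes the pullback $f^*\colon H^0(Y,mD)\hookrightarrow H^0(X,mf^*D)$ injective. Composing gives $h^0(X,m(f^*D+E))\geq h^0(Y,mD)$ for all $m$, so $\kappa(X,f^*D+E)\geq\kappa(Y,D)$ by the definition of the $D$-dimension.

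To remove $E$, I would use that $E$ being $f$-exceptional means $f(\supp E)$ has codimension at least two in $Y$, and claim this forces $f_*\mathcal{O}_X(mE)=f_*\mathcal{O}_X$ for all $m\geq 0$. I would pass to the Stein factorization $f=g\circ h$, where $h\colon X\to Y'$ has connected fibres (so $h_*\mathcal{O}_X=\mathcal{O}_{Y'}$) and $g\colon Y'\to Y$ is finite, with $Y'$ normal since $X$ is smooth. Then $h_*\mathcal{O}_X(mE)$ is a rank-one torsion-free sheaf on $Y'$ that agrees with $\mathcal{O}_{Y'}$ outside the codimension-$\geq 2$ set $h(\supp E)$; it is sandwiched between $\mathcal{O}_{Y'}$ and its reflexive hull, which is again $\mathcal{O}_{Y'}$, so $h_*\mathcal{O}_X(mE)=\mathcal{O}_{Y'}$ and hence $f_*\mathcal{O}_X(mE)=g_*\mathcal{O}_{Y'}=f_*\mathcal{O}_X$. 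The projection formula then gives $f_*\mathcal{O}_X(m(f^*D+E))=\mathcal{O}_Y(mD)\otimes f_*\mathcal{O}_X(mE)=f_*\mathcal{O}_X(mf^*D)$, whence $H^0(X,m(f^*D+E))=H^0(X,mf^*D)$ and $\kappa(X,f^*D+E)=\kappa(X,f^*D)$.

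It remains to prove $\kappa(X,f^*D)=\kappa(Y,D)$. Using the same factorization and $h_*\mathcal{O}_X=\mathcal{O}_{Y'}$, the projection formula yields $H^0(X,mh^*g^*D)=H^0(Y',mg^*D)$, reducing everything to the finite surjective morphism $g\colon Y'\to Y$ between normal varieties. Here I would argue at the level of function fields. Let $R\subseteq k(Y)$ be the subfield generated by all ratios of sections of the $mD$ and $R'\subseteq k(Y')$ the analogous field for the $mg^*D$, so that $\kappa(Y,D)$ and $\kappa(Y',g^*D)$ are the transcendence degrees over $k$ of $R$ and $R'$; clearly $g^*R\subseteq R'$. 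The key point is that $R'$ is algebraic over $g^*R$: for $\alpha=s/t$ with $s,t\in H^0(Y',mg^*D)$, multiplying the characteristic polynomial $\prod_i(T-\alpha_i)$ of $\alpha$ over $k(Y)$ by the norm $N(t)=\prod_i t_i$ of the denominator clears denominators to a polynomial $\prod_i(t_iT-s_i)$ whose coefficients are symmetric in the conjugate sections, hence descend to sections of multiples of $D$ on the normal variety $Y$; thus $\alpha$ satisfies a polynomial with coefficients in $g^*R$. Therefore the two fields have the same transcendence degree and $\kappa(Y',g^*D)=\kappa(Y,D)$ (the case $\kappa(Y,D)=-\infty$ being covered by the same norm, which sends a nonzero section upstairs to a nonzero section of a multiple of $D$ downstairs).

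I expect this finite-covering step to be the main obstacle, because in positive characteristic the extension $k(Y')/g^*k(Y)$ need not be separable, and the norm/symmetric-function computation above must be supplemented for the purely inseparable part: there one instead notes that a suitable $p^e$-th power of each ratio $\alpha$ lies in $g^*k(Y)$ and is itself a ratio of sections of a multiple of $D$, which again yields algebraicity of $R'$ over $g^*R$. Combining the lower bound with the chain $\kappa(X,f^*D+E)=\kappa(X,f^*D)=\kappa(Y,D)$ then completes the proof.
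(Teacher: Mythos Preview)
The paper does not prove this theorem at all; it is simply quoted from Iitaka's book (\cite{Iit}, Theorem 10.5) as a tool for the main argument, so there is no proof in the paper to compare against.

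Your argument is correct and is essentially the classical one. The lower bound and the removal of $E$ via Stein factorization and normality of $Y'$ are clean; the key identity $h_*\mathcal{O}_X(mE)=\mathcal{O}_{Y'}$ is exactly right, and your reflexive-hull sandwich is the standard justification. For the finite part $g\colon Y'\to Y$, your norm/characteristic-polynomial argument is the usual one: passing to a Galois closure, the coefficients of $\prod_\tau(t_\tau T-s_\tau)$ are $G$-invariant sections of $dm\,g^*D$ and hence descend to $H^0(Y,dmD)$ by normality of $Y$, so each ratio $s/t$ is algebraic over $g^*R$; the purely inseparable case is handled, as you note, by taking $p^e$-th powers. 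One small point worth making explicit is that after clearing denominators the resulting polynomial has leading coefficient $N(t)$ rather than $1$, so what you actually obtain is that $\alpha$ satisfies a monic polynomial with coefficients $c_j/N(t)\in R$; this is of course enough for the transcendence-degree comparison. With that caveat, the chain $\kappa(X,f^*D+E)=\kappa(X,f^*D)=\kappa(Y',g^*D)=\kappa(Y,D)$ goes through in any characteristic.
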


\subsection{Stable fibrations}
\begin{Definition}[Delinge and Mumford]
A flat family of nodal curves $f: X \rightarrow S$ together with sections $s_i : S \rightarrow X, i = 1,\ldots,n$ with image schemes $S_i = s_i(S)$ is called a family of $n$-pointed stable curves (or an $n$-pointed stable fibration) over $S$ of genus $g$ if

1) {$S_i$ are mutually disjoint, and  $S_i$ disjoint from the non-smooth locus $\text{Sing}(f)$;}

2) {all the geometric fibers have arithmetic genus $g$;}

3) {the sheaf $\omega_{X/S}(\sum_i S_i)$ is f-ample.}

In case $n = 0$ we simply call these stable curves (rather than stable $0$-pointed
curves).
\end{Definition}

For an $n$-pointed stable fibration $f: X \rightarrow S$ and a base change $S' \rightarrow S$, the map $X \times_S S' \rightarrow S'$ is again an $n$-pointed stable fibration. Since $f$ is a Gorenstein morphism, we know that $f^!\mathcal{O}_S[-1] \simeq \omega_{X/S}$ is an invertible sheaf, which is compatible with base change.

\subsection{Relative canonical sheaf of stable fibrations}
We  recall the following result due to Keel for a fibration of stable curves.
\begin{Theorem}[\cite{Ke} Thm. 0.4]\label{ggt2}
Let $S$ be a normal projective variety and $f: X\rightarrow S$ a stable fibration  with fibers being curves of arithmetic genus $g \geq 2$. Then
\begin{itemize}
\item[(i)]{$\omega_{X/S}$ is a semi-ample line bundle;}
\item[(ii)]{if moreover the natural map $S\rightarrow \bar{M}_g$ is finite, then $\omega_{X/S}$ is big.}
\end{itemize}
\end{Theorem}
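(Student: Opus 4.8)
The plan is to prove Theorem \ref{ggt2}, namely that for a stable fibration $f:X\to S$ of genus $g\ge 2$ the relative dualizing sheaf $\omega_{X/S}$ is semi-ample, and moreover big when $S\to\bar M_g$ is finite. The natural route is to pass to the universal family. Since $f$ is a stable fibration, the moduli property of $\bar M_g$ supplies a classifying morphism $\phi:S\to\bar M_g$ (or rather to the coarse moduli space, with the family $X/S$ pulled back from the universal family over the stack) so that $X$ is, up to the subtleties of coarse versus fine moduli, the pullback of the universal stable curve. The key point is that $\omega_{X/S}$ is the pullback of a sheaf that is already known to be semi-ample on the moduli space. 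First I would invoke Keel's theorem that on $\bar M_g$ the tautological class $\lambda$ (or more precisely the relative dualizing sheaf of the universal family) is semi-ample in positive characteristic; this is exactly the content of \cite{Ke} Thm. 0.4, and the statement we are recording is its pullback along $\phi$.

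The main steps in order are as follows. First I would set up the classifying map $\phi:S\to\bar M_g$ induced by the stable fibration, using that $S$ is normal so that the map to the coarse space is well defined and the relative dualizing sheaf descends appropriately. Second I would express $\omega_{X/S}$ in terms of pullback data: on the universal curve $\mathcal C\to\bar M_g$ the relative dualizing sheaf $\omega_{\mathcal C/\bar M_g}$ is $f$-semi-ample relative to the projection, and its pushforward determinant gives a semi-ample line bundle on the base. Since $\omega_{X/S}$ is compatible with base change (as noted for stable fibrations, $f$ being Gorenstein forces $\omega_{X/S}$ invertible and stable under pullback), we get $\omega_{X/S}\cong\psi^*\omega_{\mathcal C/\bar M_g}$ for the induced map $\psi:X\to\mathcal C$. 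Semi-ampleness is preserved under pullback by a morphism, so (i) follows immediately from Keel's semi-ampleness of the universal relative dualizing sheaf. For (ii), bigness, I would combine the $f$-relative positivity of $\omega_{X/S}$ along the fibers (automatic from $g\ge 2$, where $\omega$ is already relatively ample) with the hypothesis that $\phi$ is finite: finiteness of $S\to\bar M_g$ guarantees that the ``horizontal'' directions also acquire positivity, so that a suitable power of $\omega_{X/S}$ has maximal Iitaka dimension. Concretely, I would check that $\kappa(X,\omega_{X/S})=\dim X$ by showing the section ring grows maximally, using that both the relative (fiber) and the base contributions are nondegenerate.

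The hard part will be the positive-characteristic input itself: in characteristic zero one would cite the Kollár-type semipositivity or the projectivity of $\bar M_g$ together with ampleness results, but in positive characteristic these semipositivity theorems can fail, which is precisely the difficulty flagged in Remark \ref{R:K_not_nef}. The whole weight of the argument therefore rests on Keel's theorem \cite{Ke} being a genuinely characteristic-free statement about the semi-ampleness of $\lambda$ on $\bar M_g$; since we are permitted to assume results stated in the excerpt, I would take Theorem \ref{ggt2} as the recorded consequence of \cite{Ke} Thm. 0.4 and present the proof as the reduction above rather than reproving Keel. Thus the genuine content I must verify carefully is the base-change compatibility identifying $\omega_{X/S}$ with the pullback of the universal relative dualizing sheaf, and the deduction of bigness from finiteness of $\phi$; the semi-ampleness is then a formal pullback statement.

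I expect the subtlest technical obstacle to be the passage between the moduli \emph{stack} and the coarse moduli \emph{space} in positive characteristic, since the universal family lives over the stack and the finiteness hypothesis ``$S\to\bar M_g$ is finite'' refers to the coarse space; reconciling these, and ensuring no extra inseparability or stacky contributions spoil the identification of line bundles, is where care is needed. For the purposes of this paper, however, it suffices to cite Keel's result directly, so I would state the reduction cleanly and defer the stack-theoretic bookkeeping to \cite{Ke}.
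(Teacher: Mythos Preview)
Your reduction-to-the-universal-family approach is different from what the paper does. The paper sketches only (ii), and rather than passing to $\mathcal C\to\bar M_g$ it argues intrinsically on $X\to S$ via the relative Wronskian section (this is Keel's Proposition 4.8 argument): a local basis of $f_*\omega_{X/S}$ and a suitable choice of $g$ integers produce a nonzero global section of $\omega_{X/S}^{N}\otimes f^*\det(f_*\omega_{X/S})^{-1}$ for some $N>0$, whence $\omega_{X/S}^N\equiv f^*H+Z$ with $Z$ effective and $H\equiv\det(f_*\omega_{X/S})$. The finiteness of $S\to\bar M_g$ is used only at the last step, to conclude that $H$ is big on $S$ via Cornalba's ampleness of $\lambda$ on the coarse space; bigness of $\omega_{X/S}$ then follows from the standard decomposition lemma. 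No universal family, no stack, no base-change identification of relative dualizing sheaves is needed.

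Your route has the conceptual advantage that (i) and (ii) become formal pullback statements from a single result about $\omega_{\mathcal C/\bar M_g}$, but the price is exactly the stack-versus-coarse bookkeeping you flag: the universal curve does not live over the coarse space, so the identification $\omega_{X/S}\cong\psi^*\omega_{\mathcal C/\bar M_g}$ and the translation of ``$S\to\bar M_g$ finite'' to the stack level must be made precise (e.g.\ by passing to a level-$m$ cover $\bar M_g^{(m)}$ as the paper itself does later in the Appendix). More seriously, your argument for (ii) is not yet a proof: ``horizontal directions acquire positivity'' and ``the section ring grows maximally'' need a mechanism. The correct completion is that finiteness of $\phi$ forces $\psi:X\to\mathcal C$ (onto the image) to be generically finite and surjective, and bigness pulls back along such maps; but you should say this explicitly, and it still presupposes bigness of $\omega_{\mathcal C/\bar M_g}$ on the universal curve, which is the very content of (ii) in the universal case. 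The paper's Wronskian argument avoids this circularity by producing the big-plus-effective decomposition directly on $X$.
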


Only (ii) is needed in the sequel.  We sketch a proof of (ii) by the argument of \cite{Ke} Proposition 4.8.
\begin{proof}[Proof of (ii)]
By taking a local basis of $f_*\omega_{X/S}$ and a proper choice of $g$ positive integers, one can define the relative wronskian section (see \cite{LT}), which is independent of the choice of the basis. Thus the wronskian section defines a non-zero global section of the sheaf $\omega^{N}_{X/S} \otimes f^*\det(f_*\omega_{X/S})^{-1}$ for some $N>0$, because the nonsmooth locus of $f$ is of codimension $\geq 2$. Then we can write that
$$\omega^N_{X/S} \equiv f^*H + Z$$
where $H \equiv \det(f_*\omega_{X/S})$ is a big line bundle on $S$ (\cite{Co} 2.2) and $Z$ is the effective divisor on $X$ defined by the wronskian section. Therefore, $\omega_{X/S}$ is big (\cite{ccp08} Lemma 2.5).
\end{proof}

\begin{Remark}
One point of the proof above is the positivity of $\det(f_*\omega_{X/S})$. The semi-positivity for fibrations with sharply $F$-pure fibers (stable curves are sharply $F$-pure) is proved by Patakfalvi (\cite{Pa} Theorem 1.5, 1.6, Corollary 1.8), it is possible to prove analogous results to Theorem \ref{ggt2} for this type of fibrations.
\end{Remark}

For a semi-stable elliptic fibration, we have the following result. We give a  proof for the readers' convenience. One can also refer to \cite{Vie2} Section 9.3 for a proof.
\begin{Theorem}\label{g1}
Let $S$ be a smooth surface and $f: S\rightarrow C$ a semi-stable elliptic fibration over a smooth curve $C$. Then $\kappa(S, \omega_{S/C}) > 0$ unless $f$ is isogenous to a product.
\end{Theorem}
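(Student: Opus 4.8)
The plan is to analyze the relative canonical sheaf $\omega_{S/C}$ of a semi-stable elliptic fibration and show that, unless $f$ is isogenous to a product, some power of $\omega_{S/C}$ acquires a nonzero section, i.e. $\kappa(S,\omega_{S/C})>0$. The central invariant is the degree of the line bundle $L:=f_*\omega_{S/C}$ on $C$. First I would recall that for a relatively minimal elliptic fibration $L=f_*\omega_{S/C}$ is a line bundle of degree $\deg L=\chi(\mathcal{O}_S)+\text{(contributions from singular fibers)}\geq 0$, and that the canonical bundle formula expresses $\omega_{S/C}$ in terms of $f^*L$ together with vertical correction divisors supported on multiple and non-reduced fibers. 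Since the fibration is \emph{semi-stable}, the singular fibers are reduced with nodal (multiplicative) reduction, so the correction terms are controlled and one essentially gets $\omega_{S/C}\equiv f^*L$ up to vertical divisors that do not decrease the Kodaira dimension.

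The key dichotomy is then whether $\deg L>0$ or $\deg L=0$. If $\deg L>0$, then $f^*L$ is a nef divisor with positive self-intersection against a multisection, and a direct computation of $\kappa(S,\omega_{S/C})$ via the Iitaka dimension of $f^*L$ (using that $L$ is an ample-degree line bundle on the curve $C$) shows $\kappa(S,\omega_{S/C})\geq 1>0$. The delicate case is $\deg L=0$. Here I would invoke the structure theory: for a semi-stable elliptic surface, $\deg L=0$ forces all fibers to be smooth (no genuine singular semi-stable fibers can occur, since each nodal fiber contributes positively to $\deg L$), so $f$ is a smooth elliptic fibration, i.e. an analytic (or étale-locally) fiber bundle with elliptic fibers. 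One then argues that a smooth elliptic fibration over a curve with $\deg f_*\omega_{S/C}=0$ becomes a product after an étale base change, i.e. $f$ is isogenous to a product. This is precisely the excluded case, so whenever $f$ is \emph{not} isogenous to a product we must have $\deg L>0$ and hence $\kappa(S,\omega_{S/C})>0$.

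I expect the main obstacle to be the positive-characteristic subtleties in the $\deg L=0$ analysis, where the usual complex-analytic monodromy/period-map arguments are unavailable. One must instead use the algebraic classification (Bombieri–Mumford) and the theory of the relative Jacobian: a smooth genus-one fibration $S\to C$ has an associated elliptic fibration $J\to C$ (its relative Jacobian) that is a group scheme, and $S$ is a torsor under $J$. The condition $\deg L=0$ should be translated into $J\to C$ being isotrivial with vanishing relevant invariant, and then finiteness of the relevant automorphism/torsor data yields an étale cover trivializing the family. Care is needed with wild phenomena (quasi-elliptic fibrations, inseparable multisections, $p$-torsion in the Picard scheme) that can arise in characteristics $2$ and $3$; I would either handle these by passing to the semi-stable reduction (which is genuinely elliptic, not quasi-elliptic, by the semi-stability hypothesis) or by citing the classification to exclude them, thereby confining the exceptional locus to the isogenous-to-a-product case.
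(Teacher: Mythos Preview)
Your outline matches the paper's proof closely: both reduce the question to the sign of $\deg L$ for $L=f_*\omega_{S/C}$, use that $\omega_{S/C}\equiv f^*L$ for a semi-stable elliptic fibration, and then split into $\deg L>0$ (giving $\kappa=1$) versus $\deg L=0$. The one point you leave implicit is the mechanism linking $\deg L$ to the singular fibers: the paper computes $\chi(S,\omega_S)=\deg L$ directly (via Leray, Riemann--Roch on $C$, and $R^1f_*\omega_{S/C}=\mathcal{O}_C$) and then applies Noether's formula $12\chi(\mathcal{O}_S)=c_1^2+c_2=0+t$ with $t$ the number of nodes, so $12\deg L=t$ and $\deg L=0$ forces all fibers smooth --- after which the paper simply invokes moduli theory in one line, whereas you sketch the more explicit Jacobian/torsor route to the same conclusion.
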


\begin{proof}
First note that  $f$ is semi-stable fibration of elliptic curves, $f_*\omega_{S/C}$ is a line bundle such that $\omega_{S/C} \equiv f^*f_*\omega_{S/C}$. We have that $\omega_{S} \equiv f^*(f_*\omega_{S/C}\otimes \omega_C)$, then it follows that $c_1(S)^2 = 0$ and
\begin{equation}
\begin{split}
\chi(S, \omega_S) &= \chi(C, f_*\omega_{S/C}\otimes \omega_C) - \chi(C, R^1f_*\omega_{S/C}\otimes \omega_C)\\
& = \deg(f_*\omega_{S/C}) + \deg(\omega_C) + \chi(C, \mathcal{O}_C) - \chi(C, \omega_C) \\
& = \deg(f_*\omega_{S/C})\\
\end{split}
\end{equation}
where the second equality is from R-R formula and $R^1f_*\omega_{S/C}= \mathcal{O}_C$, and the third equality is due to $\deg(\omega_C) = 2g(C)-2$ and $\chi(C, \mathcal{O}_C) = - \chi(C, \omega_C) = 1-g(C)$.

Denote by $t$ the number of the nodes of all the fibers, which equals to the number of the singular fibers. Then $c_2(S) = t$ since Euler number of smooth fibers is 0, and of singular fibers is 1. Applying Noether's formula $12\chi(S, \omega_S) = 12\chi(S, \mathcal{O}_S)= c_1(S)^2 + c_2(S)$, we get that $12\deg(f_*\omega_{S/C}) = t \geq 0$.

If $\deg(f_*\omega_{S/C}) > 0$, then $\kappa(S, \omega_{S/C}) =1$. Otherwise we have $t = 0$, which means  all the fibers are smooth. We conclude that $f: S\rightarrow C$ is isogenous to a product by  moduli theory of curves.
\end{proof}

\section{Proof of Theorem \ref{T:Cnn-1}}

Notations are as in Theorem \ref{T:Cnn-1}. We can assume that the normalization of the geometric fiber of $f$ has genus $g \geq 1$.

\subsection{Stable reduction} \label{S:Stable_Red}
Applying de Jong's idea of alterations, we have the following commutative diagram (for a sketch of construction, see Appendix \ref{asr}).

\begin{equation}\label{Diagram}
  \begin{CD}
U  @<\rho_1 << X''= U \times_M Z' @<\rho' << X'  @> \sigma>>  \bar{X}' = X \times_Z Z'     @>\pi_1 >>          X \\
@Vh VV        @Vf'' VV   @Vf' VV      @Vg' VV       @V fVV    \\
M  @<\rho <<  Z'  @>{id} >>  Z' @>{id} >>  Z' @> \pi >>        Z
\end{CD}
\end{equation}
where
 \begin{itemize}
\item[1)] $\pi:Z' \rightarrow Z$ is an alteration (see Definition \ref{D:alter}) with $Z'$ smooth projective over $k$;

\item[2)] $\sigma:X'\rightarrow \bar{X}'$ is a birational morphism onto the strict transformation of $X$ under $Z'\rightarrow Z$ (see Definition \ref{D:Strict_T}), and $X'$ is smooth projective over $k$;

\item[3)] $\rho':X'\rightarrow X''$ is a birational morphism such that $R\rho'_*\mathcal{O}_{X'}\cong \mathcal{O}_{X''}$;

\item[4)] $h:U\rightarrow M$ is a family of stable curves with  $M$ normal varities, if the genus $g=g(\tilde{F})\geq 2$; or a family of 1-point stable curves if $g=1$. Moreover, the natural morphism $M\rightarrow \bar{M}_g$ is a finite morphism, if $g\geq 2$; or $M\rightarrow \bar{M}_{1,1}$ is finite, if $g=1$.
\end{itemize}

\begin{Claim} If $g=1$, then $\kappa(U,\omega_{U/M})=\dim M$; if $g>1$, then $\kappa(U,\omega_{U/M})=\dim U$.
\end{Claim}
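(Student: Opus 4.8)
The plan is to treat the cases $g>1$ and $g=1$ separately, the first being an immediate citation and the second requiring the special structure of the relative dualizing sheaf of an elliptic fibration. When $g>1$, the fibration $h:U\rightarrow M$ is stable with fibers of arithmetic genus $\geq 2$ and, by construction, $M\rightarrow\bar{M}_g$ is finite; Theorem \ref{ggt2}(ii) then applies directly and shows that $\omega_{U/M}$ is a big line bundle. Since $h$ has relative dimension $1$ we have $\dim U=\dim M+1$, and bigness of the line bundle $\omega_{U/M}$ on the normal projective variety $U$ is exactly the assertion $\kappa(U,\omega_{U/M})=\dim U$. So this case is settled with no further work.

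For $g=1$ the situation is genuinely different, because $\omega_{U/M}$ is numerically trivial along the fibers. I would first show that it is the pullback of a line bundle from $M$. Each geometric fiber $E$ of $h$ is an irreducible $1$-pointed stable curve of arithmetic genus $1$ (a smooth elliptic curve or an irreducible nodal cubic), so $\deg\omega_E=2p_a(E)-2=0$ while $h^0(E,\omega_E)=p_a(E)=1$; a degree-zero line bundle carrying a nonzero section on an integral curve is trivial, hence $\omega_E\cong\mathcal{O}_E$. As $h$ is flat and proper, $M$ is reduced, and $m\mapsto h^0(E_m,\omega_{E_m})$ is constant equal to $1$, cohomology and base change shows that $\lambda:=h_*\omega_{U/M}$ is a line bundle whose formation commutes with base change, and that the evaluation map $h^*\lambda\rightarrow\omega_{U/M}$ restricts on every fiber to the isomorphism $H^0(E,\omega_E)\otimes\mathcal{O}_E\xrightarrow{\sim}\omega_E$. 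A homomorphism of line bundles that is an isomorphism on each fiber is an isomorphism, so $\omega_{U/M}\cong h^*\lambda$.

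Granting this, since $h$ is a fibration (so $h_*\mathcal{O}_U=\mathcal{O}_M$), the projection formula gives $H^0(U,(h^*\lambda)^{\otimes m})\cong H^0(M,\lambda^{\otimes m})$ for all $m\geq 0$, whence $\kappa(U,\omega_{U/M})=\kappa(M,\lambda)$; it then remains to prove that $\lambda$ is big. Here I would argue that $\lambda$ is, up to a positive power, the pullback of the ample Hodge line bundle on $\bar{M}_{1,1}$ along the finite classifying morphism $M\rightarrow\bar{M}_{1,1}$. Since the pullback of an ample bundle along a finite morphism is ample, $\lambda$ is big, and therefore $\kappa(M,\lambda)=\dim M$. (When $\dim M=0$ the family is a single genus-$1$ stable curve with $\omega_U\cong\mathcal{O}_U$, and $\kappa(U,\omega_U)=0=\dim M$ trivially.)

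The main obstacle is the case $g=1$, and within it the two delicate points are the base-change identification $\omega_{U/M}\cong h^*\lambda$, which must accommodate the nodal genus-$1$ fibers in positive characteristic, and the bigness of the Hodge bundle $\lambda$, which has to be deduced through the coarse moduli space $\bar{M}_{1,1}$; passing first to a suitable power of $\lambda$ is what circumvents the stacky and coarse-space subtleties before finiteness of the classifying map is invoked. The case $g>1$ presents no difficulty, being a direct application of Theorem \ref{ggt2}(ii).
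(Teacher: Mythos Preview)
Your treatment of $g>1$ matches the paper's exactly: both invoke Theorem \ref{ggt2}(ii) directly.

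For $g=1$ your route is correct but genuinely different. The paper first observes that $\dim M\leq 1$ (since $M\to\bar{M}_{1,1}$ is finite), disposes of $\dim M=0$ trivially, and for $\dim M=1$ passes to a minimal resolution $\tilde{U}\to U$ of the surface $U$ (which has at worst rational double points); then $\tilde{U}\to M$ is a semi-stable elliptic fibration not isogenous to a product, and Theorem \ref{g1} (proved via Noether's formula) gives $\kappa(\tilde{U},\omega_{\tilde{U}/M})=1$, hence $\kappa(U,\omega_{U/M})=1$. You instead establish $\omega_{U/M}\cong h^*\lambda$ by cohomology and base change and deduce bigness of $\lambda$ from ampleness of the Hodge bundle on $\bar{M}_{1,1}$ together with finiteness of the classifying map. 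Your argument is cleaner in that it never leaves the singular total space $U$ and works uniformly without first reducing to $\dim M\leq 1$; the paper's approach is more self-contained, relying only on the explicit surface calculation of Theorem \ref{g1} rather than importing the ampleness of the Hodge class as an external moduli-theoretic fact. Both are valid, and your identification $\omega_{U/M}\cong h^*\lambda$ is essentially the same computation that opens the proof of Theorem \ref{g1} itself.
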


\begin{proof} If $g = 1$, then $\dim M = 0$ or 1. We claim that $\kappa(U, \omega_{U/M}) = \dim M$. Indeed, it is trivial, if $\dim M =0$.  If $\dim M = 1$, then $U$ is a surface with at worst rational double singularities. Let  $\tilde{U}\rightarrow U$ be a minimal resolution. Then $\tilde{U} \rightarrow M$ is a semi-stable elliptic fibration. We get $\kappa(\tilde{U}, \omega_{\tilde{U}/M}) = 1$, by  Theorem \ref{g1}.  Thus $\kappa(U, \omega_{U/M}) = 1$.

If $g>1$,  then $\omega_{U/M}$ is big by Theorem \ref{ggt2}. Thus $\kappa(U, \omega_{U/M}) = \dim U$.
\end{proof}

\subsection{Proof of Theorem \ref{T:Cnn-1}} We can assume that $\kappa(Z)\geq 0$ and the genus $g(\tilde{F})\geq 1$. Then $\kappa(X,\omega_{X/Z})\geq 0$ by the above arguments. To prove the theorem, it suffices to prove $\kappa(X)>\kappa(Z)$, if $g(\tilde{F})\geq 2$. We prove by contradiction.

Assume now, $\kappa(F)=1$, and $\kappa(X)=\kappa(Z)$. Let $\Phi_{|\omega_X^m|}$ (resp. $\Phi_{|\omega_Z^m|}$) be the rational map induced by $|\omega_X^m|$ (resp. $|\omega_Z^m|$) for a large divisble integer $m$.
Let $X_m$ (resp. $Z_m$) be the closure of the image of $\Phi_{|\omega_X^m|}$ (resp. $\Phi_{|\omega_Z^m|}$). Since $\omega_X = \omega_{X/Z} \otimes f^*\omega_Z$, we have
$|\omega_{X/Z}^m| + |f^*\omega_Z^m| \subseteq |\omega_X^m|$.
Notice that $\kappa(X)=\kappa(Z)\geq 0 $ and $\kappa(X,\omega_{X/Z})\geq 0$.
There is a natural injection $l:f^*H^0(Z,\omega_Z^m)\rightarrow H^0(X,\omega_X^m) $ induced by tensoring a nonzero section $s\in H^0(X,\omega_{X/Z}^m)$. The injection $l$ gives a commutative diagram
$$\xymatrix{X\ar[d]_{f}\ar@{-->}[rr]^{\Phi_{|\omega_X^m|}}&&X_m\ar@{-->}[d]^p\\
Z\ar@{-->}[rr]^{\Phi_{|\omega_Z^m|}}&&Z_m}$$
where $p$ is the projection induced by $l$. The rational map $p$ is generically finite and dominant, since $\kappa(X)=\kappa(Z)$. Hence, a general
 fiber of $f$ is contracted by $\Phi_{|\omega_X^m|}$.

On the other hand, by the diagram (\ref{Diagram}), we have $\rho_1$ and $\rho'$ are dominant, $\omega_{X''/Z'}=\rho_1^*\omega_{U/M}$, and an injection $\rho'^*\omega_{X''/Z'}\hookrightarrow \omega_{X'/Z'}$ by Proposition \ref{pa}. So we have inclusions of linear systems
$$\rho'^*\rho_1^*|\omega_{U/M}^m|\subseteq\rho'^*|\rho_1^*\omega_{U/M}^m|=\rho'^*|\omega_{X''/Z'}^m|\subseteq|\rho'^*\omega_{X''/Z'}^m|
\subseteq|\omega_{X'/Z'}^m|\subseteq |\sigma^*\pi_1^*\omega_{X/Z}^m(mE)|,$$
where the last inclusion is by Theorem \ref{compds}. Let $\Psi'_m$ (resp. $\Psi_m$) be the rational map induced by $|\sigma^*\pi^*_1\omega_{X/Z}^m(mE)|$ (resp. $|\omega_{X/Z}^m|$). There is an injection $l':(\pi_1\circ \sigma)^*H^0(X,\omega_{X/Z}^m)\rightarrow H^0(X',\sigma^*\pi_1^*\omega_{X/Z}^m(mE))$, and it gives a commutative diagram $$\xymatrix{Z'\ar[d]_{\pi}&&X'\ar[ll]_{f'}\ar[d]_{\pi\sigma}\ar@{-->}[rr]^{\Psi_{m}'}&&Y_m'\ar@{-->}[d]^{p'}\\
Z&&X\ar[ll]_{f}\ar@{-->}[rr]^{\Psi_m}&&Y_m}$$
where $Y_m'$ (resp. $Y_m$) is the closure of the image of $\Psi_m'$ (resp. $\Psi_m$), $p'$ is the rational map induced by $l'$. By Theorem \ref{cl},
$\kappa(X,\omega_{X/Z})=\kappa(X',\sigma^*\pi^*_1\omega_{X/Z}+E)$, so $\dim Y_m'=\dim Y_m$. Thus the map $p'$ is generically finite and dominant.
Since $\omega_{U/M}$ is big, $\Psi_m'$ does not contract a general fiber of $f'$. So $\Psi_m$ does not contract a general fiber of $f$. Contradiction!

\begin{Remark}
If assuming $\kappa(Z) \geq 0$, the argument above proves a more optimistic inequality $\kappa(X) \geq \kappa(F) + \max\{\kappa(Z), \dim M \}$. Note that $\dim M = Var(f)$ where $Var(f)$ is the variation of $f$ (cf. \cite{Ka2}). So Conjecture $C^+_{n, n-1}$ holds (cf. \cite{Vie}).
\end{Remark}

\section{Proof of Theorem \ref{T:C21}} \label{S:C21}

\begin{Proposition}[\cite{ba01} Corollary 7.3] \label{P:fiber_integral} Let $f:X\rightarrow Y$ be a dominant morphism from an irreducible nonsingular variety $X$ of dimension at least 2 to an irreducible curve $Y$, such that the rational function field $k(Y)$ is algebraically closed in $k(X)$. Then the fiber $f^{-1}(y)$ is geometrically integral for all but a finite number of closed points $y\in Y$.
\end{Proposition}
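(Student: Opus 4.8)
The plan is to prove Proposition \ref{P:fiber_integral} by reducing it to the behavior of the generic fiber, which is geometrically integral by hypothesis, and then spreading this property out to an open subset of the base curve $Y$. The key structural point is that $f_*\mathcal{O}_X = \mathcal{O}_Y$ together with the algebraic closedness of $k(Y)$ in $k(X)$ encodes exactly the geometric integrality of the generic fiber $X_\eta = X \times_Y \operatorname{Spec} k(Y)$.

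First I would establish that the generic fiber $X_\eta$ is geometrically integral. The condition that $k(Y)$ is algebraically closed in $k(X)$ means precisely that the constant field of $k(X)$ over $k(Y)$ is $k(Y)$ itself; since $X$ is nonsingular (hence normal) and irreducible, $X_\eta$ is integral, and the closedness hypothesis upgrades this to \emph{geometric} integrality by the standard criterion (the generic fiber is geometrically integral if and only if $k(Y)$ is separably closed and algebraically closed in $k(X)$). In characteristic $p$ some care is needed here with inseparability, but the statement we are importing (from \cite{ba01}) already packages the correct hypothesis. Once $X_\eta$ is geometrically integral, I would invoke the constructibility/openness results for fibral properties in flat families.

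The second and main step is the spreading-out argument. The morphism $f: X \to Y$ to a nonsingular curve is automatically flat (a dominant morphism to a regular one-dimensional scheme from an integral scheme is flat, since torsion-free over a Dedekind base is equivalent to flat). By EGA IV, the locus of points $y \in Y$ over which the geometric fiber is integral is open; since it contains the generic point $\eta$, its complement is a proper closed subset of the curve $Y$, hence a finite set of closed points. Thus $f^{-1}(y)$ is geometrically integral for all but finitely many closed $y$, which is exactly the assertion.

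The part I expect to be the genuine obstacle is not the spreading-out (which is formal once flatness and the generic-fiber statement are in hand) but rather the passage from ``$k(Y)$ algebraically closed in $k(X)$'' to \emph{geometric} integrality of the generic fiber in positive characteristic, where one must rule out that a purely inseparable extension of the base field could break integrality of the fibers. Here the hypothesis $\dim X \geq 2$ does not itself resolve this, so I would lean on the precise form of the result as stated in \cite{ba01}, treating it as a black box rather than reproving the separability subtleties; the cleanest route in our setting is simply to cite Corollary 7.3 of \cite{ba01} directly, since the proposition is quoted verbatim and is used only to guarantee that a general fiber of the surface fibration $f: S \to C$ is geometrically integral.
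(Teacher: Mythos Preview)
The paper does not prove this proposition at all; it simply quotes it as Corollary~7.3 of \cite{ba01} and uses it as a black box. Your final recommendation---to cite \cite{ba01} directly---is therefore exactly what the paper does, and the surrounding sketch is additional content that the paper does not attempt.

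That sketch has the right architecture (geometric integrality of the generic fiber, flatness over the regular one-dimensional base, then spreading out via EGA~IV), but the criterion you state is not correct: ``$k(Y)$ separably closed and algebraically closed in $k(X)$'' is just ``$k(Y)$ algebraically closed in $k(X)$'', and this only yields geometric \emph{irreducibility} of $X_\eta$. Geometric \emph{reducedness} requires that the extension $k(X)/k(Y)$ be separable (in the sense of arbitrary field extensions), which is a genuinely different condition and does not follow from relative algebraic closure in characteristic $p$. What actually makes B\u{a}descu's result go through is the extra hypothesis that $X$ is nonsingular over the algebraically closed (hence perfect) ground field $k$; this is what forces the generic fiber, and hence the general closed fiber, to be geometrically integral even when it fails to be smooth (as in the quasi-elliptic case). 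You correctly flag this as the delicate point, but the way you phrase the criterion would mislead a reader, and your fallback---citing Corollary~7.3 of \cite{ba01} for the hard step---is circular, since that corollary \emph{is} the proposition. If you want to unpack anything, cite Theorem~7.1 of \cite{ba01} for the generic fiber and then run the EGA~IV openness argument; otherwise, do as the paper does and simply quote the result.
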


\medskip

\begin{Definition}[\cite{ba01} Definition 7.6]
A morphism $f:S\rightarrow C$ is \emph{elliptic}  if $f$ is minimal, $f_*\mathcal{O}_S=\mathcal{O}_C$, and almost all the fibres of $f$ (i.e., except finitely many closed fibers) are nonsingular elliptic curves.

A morphism $f:S\rightarrow C$ if called \emph{quasielliptic} if $f$ is minimal, $f_*\mathcal{O}_S=\mathcal{O}_C$ and almost all the fibers of $f$ are singular integral curves of arithmetic genus one.
\end{Definition}

\medskip

Let $S$ be a smooth surface and $f:S\rightarrow C$ be an elliptic fibration or a quasi-elliptic fibration. Since $C$ is a smooth curve, we obtain a decomposition $R^1f_*\mathcal{O}_S=\mathcal{L}\oplus \mathcal{T},$ where $\mathcal{L}$ is an invertible sheaf and $\mathcal{T}$ is a torsion sheaf on $C$.

\begin{Theorem}[Canonical bundle formula, \cite{bm77} Theorem 2] \label{T:canonial} Let $S\rightarrow C$ be a relatively minimal elliptic fibration or a quasielliptic fibration from a smooth surface. Then
$$\omega_S\cong f^*(\omega_C\otimes \mathcal{L}^{-1})\otimes\mathcal{O}_S(\sum_{i=1}^ra_iP_i),$$
where

1) $m_iP_i=F_{b_i}\ \  (i=1,\ldots,r)$ are all the multiple fibers of $f$,

2) $0\leq a_i<m_i$,

3) $a_i=m_i-1$ if $F_{b_i}$ is not an exceptional fibers, and

4) $\deg (\mathcal{L}^{-1}\otimes \omega_C)=2p_a(C)-2+\chi(\mathcal{O}_S)+l(\mathcal{T})$, where $l(\mathcal{T})$ is the length of $\mathcal{T}$.
\end{Theorem}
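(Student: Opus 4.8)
The plan is to derive the formula from relative Grothendieck--Serre duality together with a local study of the multiple fibres, treating the elliptic and quasi-elliptic cases uniformly. The unifying observation is that a general fibre is an integral curve of arithmetic genus one (a smooth or cuspidal genus-one curve), hence Gorenstein with trivial dualizing sheaf, so the relative canonical sheaf $\omega_{S/C} = \omega_S \otimes f^*\omega_C^{-1}$ restricts to the trivial, degree-zero dualizing sheaf on every general fibre. First I would compute the pushforward $f_*\omega_{S/C}$. Applying relative duality in the form $Rf_* R\mathcal{H}om(\mathcal{O}_S, \omega_{S/C}[1]) \simeq R\mathcal{H}om(Rf_*\mathcal{O}_S, \mathcal{O}_C)$ and splitting $Rf_*\mathcal{O}_S$ via $R^0f_*\mathcal{O}_S = \mathcal{O}_C$ and $R^1f_*\mathcal{O}_S = \mathcal{L}\oplus\mathcal{T}$, the local $\mathcal{E}xt$ computation on the smooth curve $C$ (where $\mathcal{H}om(\mathcal{L},\mathcal{O}_C)=\mathcal{L}^{-1}$, $\mathcal{E}xt^1(\mathcal{L},\mathcal{O}_C)=0$, while the torsion sheaf $\mathcal{T}$ contributes only through $\mathcal{E}xt^1$) yields $f_*\omega_{S/C}\cong\mathcal{L}^{-1}$. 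This is precisely the source of the factor $f^*\mathcal{L}^{-1}$ in the formula.

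Next I would compare $\omega_{S/C}$ with $f^*\mathcal{L}^{-1}$ via the adjunction morphism $f^*f_*\omega_{S/C} = f^*\mathcal{L}^{-1}\to \omega_{S/C}$. This map is an isomorphism over the open locus of fibres whose dualizing sheaf is trivial, which includes all non-multiple fibres, since any reduced connected curve of arithmetic genus one is Gorenstein with $\omega$ trivial, and relative minimality rules out $(-1)$-curves in the fibres. Therefore $\omega_{S/C}\cong f^*\mathcal{L}^{-1}\otimes\mathcal{O}_S(D)$ with $D$ an effective vertical divisor supported exactly on the multiple fibres $m_iP_i = F_{b_i}$, so $D = \sum_{i=1}^r a_iP_i$. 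Twisting by $f^*\omega_C$ gives the asserted shape $\omega_S \cong f^*(\omega_C\otimes\mathcal{L}^{-1})\otimes\mathcal{O}_S(\sum_i a_iP_i)$, reducing the theorem to pinning down the coefficients $a_i$ and computing the degree.

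The local analysis at a multiple fibre is where I expect the real work, and it is the main obstacle. Writing $F_{b_i} = m_iP_i$, the coefficient $a_i$ is controlled by the order of the normal-bundle class $\mathcal{O}_S(P_i)|_{P_i}$ and by the local structure of $\mathcal{O}_S$ along $P_i$; computing $\omega_{S/C}$ étale-locally over the base and using that it is relatively numerically trivial on $P_i$ forces $0\le a_i<m_i$. In the \emph{tame} (non-exceptional) case the class $\mathcal{O}_S(P_i)|_{P_i}$ has exact order $m_i$ and the classical Kodaira-type argument gives $a_i=m_i-1$. The delicate point, special to positive characteristic, is the \emph{wild} (exceptional) case, where $p\mid m_i$ and the order of that torsion class can drop, allowing $a_i<m_i-1$; distinguishing the two behaviours and controlling the wild fibres through the infinitesimal structure of $R^1f_*\mathcal{O}_S$, equivalently through the torsion sheaf $\mathcal{T}$, is exactly the content supplied by Bombieri--Mumford's analysis.

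Finally, the degree formula (4) is a short Euler-characteristic computation. The Leray spectral sequence gives $\chi(\mathcal{O}_S) = \chi(\mathcal{O}_C) - \chi(R^1f_*\mathcal{O}_S) = \chi(\mathcal{O}_C) - \chi(\mathcal{L}) - l(\mathcal{T})$, and Riemann--Roch on $C$ then yields $\deg\mathcal{L} = -\chi(\mathcal{O}_S) - l(\mathcal{T})$. Hence $\deg(\mathcal{L}^{-1}\otimes\omega_C) = -\deg\mathcal{L} + (2p_a(C)-2) = 2p_a(C)-2+\chi(\mathcal{O}_S)+l(\mathcal{T})$, as claimed.
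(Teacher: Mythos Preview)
The paper does not give its own proof of this statement: Theorem~\ref{T:canonial} is quoted verbatim from Bombieri--Mumford \cite{bm77} (their Theorem~2) and is used as a black box in the proof of Theorem~\ref{T:C21}. So there is no proof in the paper to compare against; your sketch is essentially a condensed outline of the Kodaira/Bombieri--Mumford argument itself.

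That said, your outline has one genuine gap. You argue that the adjunction map $f^*\mathcal{L}^{-1}\to\omega_{S/C}$ is an isomorphism over every non-multiple fibre because ``any reduced connected curve of arithmetic genus one is Gorenstein with $\omega$ trivial.'' But non-multiple fibres are not in general reduced: already in Kodaira's list the types $I_n^*$, $II^*$, $III^*$, $IV^*$ have components of multiplicity $>1$ while $\gcd$ of the multiplicities is $1$, and in positive characteristic further non-reduced configurations occur. (Separately, the assertion that an arbitrary reduced connected curve of arithmetic genus one is Gorenstein is false without the hypothesis that it lies on a smooth surface; the spatial triple point is a counterexample.) The correct way to see that the effective vertical divisor $D$ in $\omega_{S/C}\cong f^*\mathcal{L}^{-1}\otimes\mathcal{O}_S(D)$ is supported only on multiple fibres is the Zariski--lemma argument: relative minimality gives $K_S\cdot C_i\ge 0$ for every fibre component $C_i$, hence $D\cdot C_i\ge 0$; combined with $(D|_F)^2\ge 0$ and the negative semi-definiteness of the intersection form on a fibre $F$, this forces $D|_F$ to be a nonnegative integer multiple of $F/\gcd(n_i)$, which is $0$ when $\gcd(n_i)=1$.

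The remaining parts of your sketch are fine as an outline. The duality computation of $f_*\omega_{S/C}\cong\mathcal{L}^{-1}$ and the Leray/Riemann--Roch derivation of the degree formula~(4) are correct. You are also right that the determination of the $a_i$ at wild (exceptional) multiple fibres is where the substance lies, and that you are effectively deferring that step back to \cite{bm77}; since that is exactly what the present paper does, your proposal is not more incomplete than the paper on this point.
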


Note that the condition $f_*\mathcal{O}_S=\mathcal{O}_C$ implies that $k(C)$ is algebraically closed in $k(S)$. So if $f:S\rightarrow C$ is a fibration, then  $f$ is a separable fibration by Proposition \ref{P:fiber_integral}.

We apply classification of surfaces by Bombieri and Mumford.

\begin{Theorem}[\cite{ba01} Corollary 10.22] If $S$ is a smooth surface with $\kappa(S)\geq 0,$ then the birational isomorphism class of $X$ contains a unique minimal model.
\end{Theorem}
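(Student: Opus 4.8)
The plan is to treat existence and uniqueness separately, regarding uniqueness as the substantive point since the existence of a minimal model is a general fact about smooth surfaces. For existence I would start from $S$ and repeatedly contract exceptional curves of the first kind: whenever $S$ is not minimal it carries a smooth rational curve $E$ with $E^2=-1$, which by Castelnuovo's contractibility criterion can be blown down to a smooth projective surface $S'$ with Picard number $\rho(S')=\rho(S)-1$. Since $\rho$ is a positive integer that strictly decreases at each step, the process terminates after finitely many contractions in a minimal surface $S_0$ birational to $S$. This argument is purely intersection-theoretic and valid over any algebraically closed field, so the positive characteristic hypothesis causes no difficulty here.

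For uniqueness the decisive input is that a minimal model of a surface of nonnegative Kodaira dimension has nef canonical class. Since $\kappa$ is a birational invariant, any minimal model $S_0$ satisfies $\kappa(S_0)=\kappa(S)\geq 0$, so $S_0$ is neither rational nor ruled; I would then invoke the Bombieri--Mumford classification of surfaces in positive characteristic to conclude that $K_{S_0}$ is nef, i.e. $K_{S_0}\cdot C\geq 0$ for every curve $C$. I expect this to be the main obstacle: it is exactly the place where the positive-characteristic classification must be used, and one must account for phenomena absent over $\mathbb{C}$, such as quasi-elliptic fibrations, when excluding the possibility that $K_{S_0}$ fails to be nef.

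Granting nefness, uniqueness follows from a rigidity lemma: if $\phi\colon S_1\dashrightarrow S_2$ is a birational map of smooth surfaces with $K_{S_2}$ nef, then $\phi$ is a morphism. To prove this I would resolve the indeterminacy by a minimal common model, choosing a smooth surface $W$ with birational morphisms $p\colon W\to S_1$ and $q\colon W\to S_2$ such that $q=\phi\circ p$ and $p$ uses the fewest blow-ups. If $\phi$ were not a morphism then $p$ is a nontrivial composition of blow-ups, so $W$ carries a $(-1)$-curve $E$ contracted by $p$; minimality of the resolution forces $E$ not to be contracted by $q$, so that $q_*E$ is a nonzero effective cycle. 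Writing $K_W=q^*K_{S_2}+E_q$ with $E_q\geq 0$ supported on the $q$-exceptional locus, the projection formula together with nefness gives $K_W\cdot E=K_{S_2}\cdot q_*E+E_q\cdot E\geq 0$, where $E_q\cdot E\geq 0$ because $E$ is not a component of $E_q$. This contradicts $K_W\cdot E=-1$, which holds by adjunction for a $(-1)$-curve, and hence $\phi$ is a morphism.

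Finally I would apply the rigidity lemma in both directions. Given two minimal models $S_1,S_2$ of $S$, both have nef canonical class by the previous step, so the birational map $\phi\colon S_1\dashrightarrow S_2$ and its inverse are each morphisms; since $\phi$ and $\phi^{-1}$ are both morphisms, $\phi$ is an isomorphism, and therefore $S_1\cong S_2$. The only characteristic-sensitive ingredient in the entire argument is the nefness of $K$ on a minimal model of nonnegative Kodaira dimension, which I would import from the Bombieri--Mumford classification; the existence argument and the rigidity lemma are formal consequences of intersection theory and adjunction on smooth surfaces.
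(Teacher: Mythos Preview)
The paper does not give a proof of this statement; it is quoted verbatim as a result from B\u{a}descu's book (Corollary 10.22) and used without argument. So there is no ``paper's own proof'' to compare against, and your outline is in fact the standard textbook argument that appears in that reference.

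Your proof is correct in substance. One refinement: you cite the full Bombieri--Mumford classification to obtain nefness of $K_{S_0}$ on a minimal model with $\kappa\geq 0$, but this is stronger than what is needed and in fact the logical order is reversed---nefness is an input to the classification, not a consequence of it. The direct argument is elementary: if $K_{S_0}\cdot C<0$ for some irreducible curve $C$, then either $C^2<0$, in which case adjunction forces $C$ to be a $(-1)$-curve, contradicting minimality; or $C^2\geq 0$, in which case Riemann--Roch and the Hodge index theorem show that $|nK_{S_0}|=\varnothing$ for all $n>0$ (since $nK_{S_0}\cdot C<0$ while $C$ is nef), contradicting $\kappa(S_0)\geq 0$. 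This works uniformly in all characteristics and avoids any appeal to the classification. Your rigidity lemma and the concluding symmetry argument are correct as stated.
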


\begin{Theorem}[\cite{ba01} Theorem 12.8, Proposition 13.8] \label{T:uniruled} Let $S$ be a minimal surface with $\kappa(S)=-\infty$.

1) If $h^1(S,\mathcal{O}_S)=0$ then $S$ is isomorphic to $\mathbb{P}^2$ or a Hirzebruch surface $\mathbb{F}_n:=\mathbb{P}(\mathcal{O}_{\mathbb{P}^1}\oplus\mathcal{O}_{\mathbb{P}^1}(n))\rightarrow \mathbb{P}^1$ with $n\neq 1$.

2) If $h^1(S,\mathcal{O}_S)\geq 1$ then the image $C$ of the Albanese map is a smooth curve. Moreover, there exists a rank two vector bundle $\mathcal{E}$ on $C$ such that alb$_X:X\rightarrow C$ is ismophic to $\mathbb{P}(\mathcal{E})\rightarrow C$.
\end{Theorem}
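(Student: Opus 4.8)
The plan is to prove that a minimal surface $S$ with $\kappa(S)=-\infty$ is geometrically ruled, and then to read off the two cases from the irregularity $q:=h^1(S,\mathcal{O}_S)$. The backbone is Castelnuovo's rationality criterion, which holds in every characteristic: a smooth projective surface is rational if and only if $q=0$ and the bigenus $P_2=h^0(S,\omega_S^{\otimes 2})$ vanishes.

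First I would treat case 1, where $q=0$. Since $\kappa(S)=-\infty$ forces all plurigenera $P_m=h^0(S,\omega_S^{\otimes m})$ to vanish, in particular $P_2=0$, Castelnuovo's criterion shows that $S$ is rational. It then remains to classify minimal rational surfaces. Running over the extremal contractions (or, classically, analysing the linear systems attached to a negative section), one finds that a minimal rational surface is either $\mathbb{P}^2$ or a $\mathbb{P}^1$-bundle $\mathbb{F}_n\to\mathbb{P}^1$; the value $n=1$ is excluded because $\mathbb{F}_1$ is the blow-up of $\mathbb{P}^2$ at a point and therefore carries a $(-1)$-curve, contradicting minimality.

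Next, for case 2, I assume $q\geq 1$ and study the Albanese map $a\colon S\to A:=\mathrm{Alb}(S)$ with image $C$. One first rules out $\dim C=2$: if $a$ were generically finite onto its image then wedging two independent pull-back $1$-forms would produce a nonzero section of $\omega_S$, forcing $\kappa(S)\geq 0$. Hence $C$ is a curve; after Stein factorisation one obtains a relatively minimal fibration $f\colon S\to C$ onto a smooth curve of genus $q$ with connected fibres. For the general fibre $F$ one has $\omega_S|_F\cong\omega_F$ because $F^2=0$, so $\deg(\omega_S|_F)=2g(F)-2$; if $g(F)\geq 1$ then an analysis of the fibration --- via the canonical bundle formula of Theorem \ref{T:canonial} when $g(F)=1$, and via the relative canonical degree when $g(F)\geq 2$ --- yields $\kappa(S)\geq 0$, a contradiction. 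Thus $g(F)=0$ and $f$ is a $\mathbb{P}^1$-fibration. Minimality of $S$ forbids $(-1)$-curves, so $f$ has no reducible fibres (a degenerate conic-bundle fibre splits into two $(-1)$-curves); every fibre is therefore a smooth $\mathbb{P}^1$. By Tsen's theorem the generic fibre has a rational point, so $f$ is a Zariski-locally trivial $\mathbb{P}^1$-bundle, hence $S\cong\mathbb{P}(\mathcal{E})\to C$ for a rank-two bundle $\mathcal{E}$ on $C$.

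The main obstacle is establishing ruledness itself, that is, the two places where $\kappa(S)=-\infty$ must be converted into rationality (case 1) or into rational general fibres (case 2). In positive characteristic one cannot appeal to Kodaira vanishing or to Hodge theory, so the proof of Castelnuovo's criterion and the control of the Albanese fibration must follow Bombieri--Mumford's characteristic-$p$ arguments. Along the way one must handle genuinely new phenomena: a possibly inseparable Albanese map, quasi-elliptic fibres in the $g(F)=1$ analysis (which is why Theorem \ref{T:canonial} is stated for both elliptic and quasi-elliptic fibrations), and the fact that $\dim\mathrm{Alb}(S)$ may be strictly smaller than $h^1(S,\mathcal{O}_S)$ when the Picard scheme is non-reduced, so that one must first guarantee the Albanese target is positive-dimensional. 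Once ruledness is secured, the minimality bookkeeping that produces the explicit list is formal.
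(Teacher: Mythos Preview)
The paper does not prove this theorem at all: it is quoted verbatim from B\u{a}descu's book (\cite{ba01} Theorem~12.8 and Proposition~13.8) and used as a black box in the proof of Theorem~\ref{T:C21}. So there is no ``paper's own proof'' to compare your proposal against.

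That said, your outline is a faithful summary of the standard characteristic-$p$ argument in the cited reference (Castelnuovo's criterion, Albanese fibration, exclusion of $g(F)\geq 1$ via the canonical bundle formula and relative canonical degree, Tsen's theorem). The one point you correctly flag but do not resolve is the subtlety that $h^1(S,\mathcal{O}_S)\geq 1$ need not a priori force $\dim\mathrm{Alb}(S)\geq 1$ in positive characteristic; in B\u{a}descu's treatment this is handled by first proving (via the full classification machinery) that a minimal surface with $\kappa=-\infty$ is ruled, which then gives the Albanese picture as a consequence rather than as the starting point. Your sketch reverses this logical order and would need an independent argument at that step.
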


\begin{proof}[Proof of Theorem \ref{T:C21}]  We can assume that the genus $g(C)\geq 1$.

After running relative minimal model program, we can assume that $f:S\rightarrow C$ does not contain $(-1)$-curve on each fiber. If there is a horizontal $(-1)$-curve, then it will be mapped onto $C$ which has genus $g(C)\geq 1$. Contradiction! Therefore, we can assume that $f:S\rightarrow C$ with $S$ minimal (i.e. there are no $(-1)$-curves on $S$) and $g(C)\geq 1$.

There are four cases: $\kappa(S)=-\infty,0,1,2$.

1) $\kappa(S)=2$. Since $\kappa_1(F)$ and $\kappa(C)$ are at most 1, theorem holds.

2) $\kappa(S)=1$. Since $S$ is minimal, $\omega_S$ is semi-ample. Consider the Iitaka fibration on $\omega_S$, which is a morphism $g:S\rightarrow T$ with general fiber $G$. Then $G$ is an elliptic curve or a singular curve with arithmetic genus 1, and the self intersection number $G^2=0$. $$\xymatrix{S\ar[rr]^{g}\ar[d]_{f}&&T\\
C}$$
If $G$ is mapped onto $C$ by $f$, then $g(C)\leq 1$. So $g(C)=1$ and $\kappa(C)=0$. Then $\kappa(S)\geq \kappa_1(F)+\kappa(C)$.

If $G$ is mapped into a point of $C$. Notice that $G^2=0$, then $G$ is a multiple of a fiber of $f$. Then general fiber of $f$ is an elliptic curve, or a singular curve with arithmetic genus 1. Since $f$ is a flat morphism, $\kappa_1(F)=0$. So again we have $\kappa(S)\geq \kappa_1(F)+\kappa(C)$.

3) $\kappa(S)=0$. Since $S$ is minimal and $\kappa(S)=0$, we have $\omega_S\sim_{\mathbb{Q}}0$. For a general fiber $F'$ of $f$, by adjunction formula, $(\omega_S+F')|_{F'}=\omega_{F'}\sim_{\mathbb{Q}} 0$. Then $f$ is either an elliptic fibration or a quasi-elliptic fibration. By Canonical bundle formula, $$\deg f_*\omega_S^n=n(2p_a(C)-2)+n\cdot\chi(\mathcal{O}_S)+n\cdot l(\mathcal{T})+n(\sum_{i=1}^ra_i/m_i).$$
Since $\kappa(S)=0$, we have $\chi(\mathcal{O}_S)\geq 0$ (\cite{bm77}, Table above Theorem 5). Since $\kappa(S)=0$, we have $\chi(C, f_*\omega_S^n) \leq h^0(C, f_*\omega_S^n)\leq 1$. Since $\text{rank } f_*\omega_S^n = 1$ and $g(C)\geq 1$, we get  $\deg f_*\omega_S^n=0$ by R-R formula. Thus $p_a(C)=1$ and $\kappa(S)\geq \kappa_1(F)+\kappa(C)$.

4) $\kappa(S)=-\infty$. By Theorem \ref{T:uniruled}, $S$ admits a $\mathbb{P}^1$-fibration $h:S\rightarrow C'$, where $C'$ is a smooth curve. $$\xymatrix{S\ar[rr]^{h}\ar[d]_{f}&&C'\\
C}$$
Let $H$ be a general fiber of $h$, then $H$ is a smooth rational curve and the self intersection number $H^2=0$. If $H$ is horizontal of $f$, then $f(H)=C$. Contradicting to the assumption $g(C)\geq 1$. Therefore, general fiber of $f$ is $\mathbb{P}^1$. So $\kappa(S)\geq \kappa_1(F)+\kappa(C)$.
\end{proof}

\begin{Remark} \label{R:K_not_nef} We shall mention that, contrast to $\mathbb{C}$, there exist examples $f:S\rightarrow C$ from a smooth projective surface $S$ to a smooth projective curve $C$ (\cite{ra78},\cite{xie} Theorem 3.6) such that $f_*\mathcal{\omega}_{S/C}^l$ is not a nef vector bundle for any $l>0$.
\end{Remark}

By Theorem 1.3, we have

\begin{Proposition}  Let $S$ be a smooth projective surface over $k$, admitting a fibration $f:S\rightarrow C$ satisfying:

1) $C$ is a smooth projective curve with genus $g(C)\geq 2$ and

2) the general fibers of $f$ are of arithmetic genus $\geq 2$.

Then $S$ is a surface of  general type.
\end{Proposition}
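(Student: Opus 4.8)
The plan is to read the Proposition off from Theorem \ref{T:C21} as an essentially formal corollary. By that theorem we have $\kappa(S) \geq \kappa_1(F) + \kappa(C)$, so it suffices to show that each of the two terms on the right-hand side equals $1$; since $\dim S = 2$ forces $\kappa(S) \leq 2$, this immediately yields $\kappa(S) = 2$, i.e., $S$ is of general type. Thus the whole argument reduces to two one-dimensional computations together with one invocation of Theorem \ref{T:C21}.

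First I would evaluate $\kappa(C)$. Since $g(C) \geq 2$, the canonical bundle $\omega_C$ has degree $2g(C) - 2 \geq 2 > 0$, so by Riemann--Roch on the curve $C$ the dimensions $h^0(C, \omega_C^{\otimes m})$ grow linearly in $m$; hence $\kappa(C) = \kappa(C, \omega_C) = 1$.

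Next I would evaluate $\kappa_1(F) = \kappa(F, \omega_F^o)$. The morphism $f$ is flat, its target being a smooth curve and its fibers having pure dimension one, so all fibers — in particular the generic geometric fiber $F$ — share the same arithmetic genus $p_a(F) \geq 2$. Because $f$ is a fibration between smooth projective varieties, the dualizing sheaf $\omega_F^o$ is a line bundle (Remark after Definition \ref{D:Kod1}); on the integral curve $F$ its degree is $\deg \omega_F^o = 2p_a(F) - 2 \geq 2 > 0$. A line bundle of positive degree on an integral projective curve again has linearly growing spaces of sections under taking powers, so $\kappa_1(F) = 1$. Combining the two computations through Theorem \ref{T:C21} gives $\kappa(S) \geq 1 + 1 = 2$, whence $\kappa(S) = 2$.

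There is essentially no genuine obstacle here, as the statement is a direct consequence of the subadditivity inequality. The one point requiring a little care is that the possibly singular generic geometric fiber $F$ must be integral with invertible dualizing sheaf of degree $2p_a(F) - 2$, so that $\kappa_1(F)$ is genuinely $1$ rather than something smaller or undefined. Integrality of $F$ follows from the fibration hypothesis $f_*\mathcal{O}_S = \mathcal{O}_C$, which makes $k(C)$ algebraically closed in $k(S)$, via Proposition \ref{P:fiber_integral}.
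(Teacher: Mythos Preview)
Your proposal is correct and is exactly the approach the paper intends: the Proposition is stated immediately after ``By Theorem~1.3, we have'' with no further argument, so deducing $\kappa(S)\geq \kappa_1(F)+\kappa(C)=1+1=2$ from the two easy one-dimensional computations is precisely what is meant. Your care about the integrality of $F$ and the invertibility and degree of $\omega_F^o$ fills in the routine details the paper leaves implicit.
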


\begin{Remark} \label{R:general_surf} Theorem \ref{T:Cnn-1} does not imply that $S$ is of general type.
\end{Remark}

\section{Appendix: Alterations and stable reduction} \label{asr}

In the section, we discuss the construction of the diagram (\ref{Diagram}) in Section \ref{S:Stable_Red}.
Let $f: X \rightarrow Z$ be a separable fibration between smooth projective varieties over $k$ of relative dimension 1. We assume that the normalization of the generic geometric fiber of $f$ has genus $g\geq  1$. Then  $f$ can be altered into a stable fibration due to de Jong (\cite{J}) . We sketch the construction by \cite{AO} Section 3 and 4. First we introduce some definitions.

\begin{Definition}[de Jong]\label{D:alter} A morphism of varieties $f:Y\rightarrow X$ is called a \emph{modification} if it is proper and birational.
The morphism $f$ is called an \emph{alternation} if it is proper, surjective and generically finite.
\end{Definition}

\begin{Definition}[\cite{AO} Definition 3.1] \label{D:Strict_T}
Let $f: X \rightarrow Z$ be a fibration between two varieties, $Z' \rightarrow Z$ a proper surjective morphism, where $Z'$ is a variety. The fiber product $\bar{X}': = X\times_Z Z'$ has a unique irreducible component dominant over $X$, which is the Zariski closure $X':=\overline{X \times_Z \eta} \subset X\times_Z Z'$ of the generic fiber, where $\eta$ is the generic point of $Z'$. We call $X'$ the \emph{strict transform} of $X$ under the base change $Z' \rightarrow Z$.
\end{Definition}

\emph{Step 1}. Since $f: X \rightarrow Z$ is a separable fibration, by \cite{J} Lemma 2.8, there is a finite purely inseparable extension $Z_1 \rightarrow Z$, where $Z_1$ is a projective variety, such that the normalization $X_1$ of $X \times_Z Z_1$ is smooth over the generic point of $Z_1$. Denote the fibration by $f_1:X_1\rightarrow Z_1$.

\emph{Step 2}.  There exists a modification $Z_2 \rightarrow Z_1$, such that the morphism  $f_2:X_2\rightarrow Z_2$ is flat by the Flattening Lemma (\cite{AO} Lemma 3.4), where $X_2$ is the strict transform of $Z_2$ under $X_1\rightarrow Z_1$(\cite{AO} Definition 3.1, Remark 3.2. Here need to rewrite! The definition of strict transform should be given.).

\emph{Step 3}. There exists a separable finite morphism $Z_3 \rightarrow Z_2$ such that the fibration $f_3:X_3\rightarrow Z_3$ has $l \geq 3$ distinct sections $s_i: Z_3 \rightarrow X_3, i=1,2,\ldots,l$ and the sections $s_i$ intersect every irreducible component of the geometric fiber of $f_3$ more than 2 points (\cite{AO} Lemma 4.6, 4.9), where $X_3$ is the strict transform of $Z_3$ under $X_2\rightarrow Z_2$.

\emph{Step 4}. There exists an alteration $Z_4 \rightarrow Z_3$ such that there exists a family of $l$-pointed stable curves $f_4:X_4 \rightarrow Z_4$(\cite{AO} Section 4.6 -  4.9). Here we need the assumption $l\geq 3$, so that we can apply Three Point Lemma (\cite{AO} Lemma 4.10).

\emph{Step 5}.
From $f_4:X_4 \rightarrow Z_4$, we can obtain a family of stable curves $\bar{f}_4: \bar{X}_4\rightarrow Z_4$ if general fibers of are of genus $g \geq 2$ (respectively, a family of 1-pointed stable curves if $g=1$) by ``contraction'' (\cite{AO} Section 3.7). Indeed, first deleting $l$ sections if general fibers of $f_4$ have genus $g \geq 2$ (respectively deleting $l-1$ sections if $g = 1$), then
there is a relative contraction map $X_4 \rightarrow \bar{X}_4$ over $Z_4$ by  contracting all the ``non-stable component'' of fibers (regular rational curves
containing not enough singularities and marked points).

By moduli theory of curves (cf. \cite{AO} Section 13), the moduli of the stable curves of genus $g$ if $g>1$ (1-pointed stable curves if $g=1$) with level-$m$ structure is a fine moduli space for $m\geq 3$. Thus, there is a universal family of stable curves $U_g^{(m)} \rightarrow \bar{M}_g^{(m)}$ (or 1-pointed stable curves $U_{1,1}^{(m)} \rightarrow \bar{M}_{1,1}^{(m)}$), and a natural finite surjective morphism $\bar{M}_g^{(m)} \rightarrow \bar{M}_g$ (or $\bar{M}_{1,1}^{(m)} \rightarrow \bar{M}_{1,1}$).

For simplicity, we only consider the case $g\geq 2$. The family $\bar{f}_4: \bar{X}_4\rightarrow Z_4$ induces a natural morphism $Z_4 \rightarrow \bar{M}_g$. After a finite surjective base change $Z_4' \rightarrow  Z_4$, we have a natural map $Z_4' \rightarrow  \bar{M}_g^{(m)}$, and denote its image by $M$ and the pulled back universal family by $U \rightarrow M$.

$$\xymatrix{&&U\ar[d]\ar[r]&U_g^{(m)}\ar[d]\\
&Z_4'\ar[d]\ar[r]&M\ar[r]&M_g^{(m)}\ar[d]\\
\bar{X}_4\ar[r]&Z_4\ar[rr]&&\bar{M}_g}$$

Notice that the two fiber products $\bar{X}_4 \times_{Z_4} Z_4' \rightarrow Z_4'$ and $U \times_{M} Z_4' \rightarrow Z_4'$ are two stable fibrations having isomorphic generic geometric fibers. By  \cite{AO} Lemma 3.19, there is a finite extension $Z_4'' \rightarrow Z_4'$ such that $\bar{X}_4 \times_{Z_4} Z_4'' \rightarrow Z_4''$ and $U \times_{M} Z_4'' \rightarrow Z_4''$ are isomorphic families of stable curves.

\emph{Step 6}.
Let  $Z_5 \rightarrow Z_4''$ be an alteration satisfying

1) $Z_5$ is smooth, and

2) the locus  $\Sigma \subset Z_5$, over which the  fibration $f_5: X_5 := X_4 \times_{Z_4} Z_5 \rightarrow Z_5$ is not smooth, is a simple normal crossing divisor (\cite{AO} Section 4.10).

Then
$$\bar{X}_5 := \bar{X}_4 \times_{Z_4} Z_5 \cong U \times_{M} Z_5\rightarrow Z_5$$
is a family of stable curves which is from $f_5: X_5 \rightarrow Z_5$ by contracting all the ``non-stable component'' of fibers.

The variety $X_5$ has mild singularities. That is, if $x$ is a singular point of $X_5$, then the complete local ring of $X_5$ at $x$ can be described as:
$$k[[u,v,t_1,\ldots,t_{n-1}]]/(uv - t_1^{n_1}\cdots t_r^{n_r})$$
where $n=\dim X_5$, $t_1,\ldots,t_{d-1}$ is a regular system of parameters of $f_5(x)\in Z_5$ such that $\Sigma$ coincides on a neighborhood with the zero locus of $t_1\cdots t_r$ for some $r\leq n-1$ (cf. \cite{AO} Section 4.11).

There is a resolution of singularities $\mu: X_5' \rightarrow X_5$ by blowing up (constructed in \cite{AO} Section 4.11). We can check that $R\mu_*\mathcal{O}_{X_5'} \cong \mathcal{O}_{X_5}$. For the composite morphism $\nu: X_5' \rightarrow \bar{X}_5$, we have $R\nu_*\mathcal{O}_{X_5'} \cong \mathcal{O}_{\bar{X}_5}$, because only rational curves are contracted by the morphism $X_5 \rightarrow \bar{X}_5$.

\emph{Step 7}.
In conclusion, letting $Z' \rightarrow Z$ be the composite base change $Z_5\rightarrow Z$, $X''= \bar{X}_5$ and $X' = X_5'$ introduced above, then we obtain the commutative diagram (\ref{Diagram}) in Section \ref{S:Stable_Red}.


\begin{thebibliography}{1dffs}

\bibitem[AO]{AO} D. Abramovich, F. Oort, \textit{Alterations and resolution of singularities}. Resolution of singularities (Obergurgl, 1997) 285--298. Progr. Math., 181, Birkh\"{a}user, Basel, 2000.
\bibitem[Ba]{ba01} L. B\v{a}descu; \textit{Algebraic surfaces}. Universitext. Springer-Verlag, New York, 2001.
 \bibitem[BCHM]{bchm}  C. Birkar, P. Cascini, C. Hacon, J. M\textsuperscript{c}Kernan; \textit{Existence of minimal models for varieties of log general type}. J. Amer. Math. Soc. 23 (2010), no. 2, 405--468.
\bibitem[Bi]{bi} C. Birkar, \textit{The Iitaka conjecture $C_{n,m}$ in dimension six}. Compos. Math. 145 (2009), no. 6, 1442--1446.
\bibitem[BM]{bm77} E. Bombieri, D.  Mumford; \textit{Enriques' classification of surfaces in char. p. II.} Complex analysis and algebraic geometry, pp. 23--42. Iwanami Shoten, Tokyo, 1977.
\bibitem[CCP]{ccp08} F. Campana, J. A. Chen, T. Peternell, \textit{Strictly nef divisors}. Math. Ann. 342 (2008), no. 3, 565--585.
\bibitem[CH]{CH} J.A. Chen, C.D. Hacon, \textit{Kodaira dimension of irregular varieties}. Invent. Math. 186 (2011), no. 3, 481--500.
\bibitem[Co]{Co} M. Cornalba, \textit{On the projectivity of moduli space of curves}. J. Reine Angew. Math. 443 (1993), 11--22.
\bibitem[deJ]{J} A. J. de Jong, \textit{Families of curves and alterations}. Ann. Inst. Fourier (Grenoble) 47 (1997), no. 2, 599--621.
\bibitem[Fu]{Fu} O. Fujino, \textit{On maximal Albanese dimensional varieties}. arXiv:0911.2851v1.
\bibitem[GL]{GL} A. Garcia, R.  Lax,  \textit{Weierstrass weight of Gorenstein singularities with one or two branches}. Manuscripta Math. 81 (1993), no. 3-4, 361--378.
\bibitem[Ha]{Ha} R. Hartshorne, \textit{Residues and duality}. Lecture Notes in Mathematics, No. 20, 1966.
\bibitem[Ha1]{ha77} R. Hartshorne, \textit{Algebraic Geometry}. Graduate Texts in Mathematics, No. 52. 1977.
\bibitem[Hu]{Hu} D. Huybrechts, \textit{Fourier-Mukai transforms in algebraic geometry}. Oxford Mathematical Monographs (2006).
\bibitem[Iit]{Iit} S. Iitaka; \textit{Algebraic geometry}. An introduction to birational geometry of algebraic varieties,
Graduate Texts in Mathematics, 76. 1982.
\bibitem[Ka1]{Ka0} Y. Kawamata, \textit{Characterization of abelian varieties}. Compositio  Math.
43(1981), no. 2, 253--276.
\bibitem[Ka2]{Ka1}Y. Kawamata, \textit{Kodaira dimension of algebraic fiber spaces over curves}. Invent. Math., 66(1982), no. 1, 57--71.
\bibitem[Ka3]{Ka2} Y. Kawamata, \textit{Minimal models and the Kodaira dimension of algebraic fiber spaces}. J. Reine Angew. Math., 363 (1985), 1--46.
\bibitem[Ko]{Ko} J. Koll\'{a}r, \textit{Subadditivity of the Kodaira dimension: fibers of general
type}. Algebraic geometry, Sendai, 1985. Adv. Stud. Pure Math., vol.
10 (1987), pp. 361--398, 1987.
\bibitem[Ke]{Ke} S. Keel, \textit{Basepoint freeness for nef and big line bundles in positive characteristic}. Ann. of Math. (2) 149(1999), no. 1, 253--286.
\bibitem[Lai]{Lai} C. Lai, \textit{Varieties fibered by good minimal models}. Math. Ann., 350 (2011), no. 3, 533--547.
\bibitem[Laz]{la} R. Lazarsfeld, \textit{Positivity in algebraic geometry, I}. Springer-Verlag, Berlin, 2004.
\bibitem[LT]{LT}  D. Laksov, A. Thorup, \textit{The Brill-Segre formula for families of curves}, Comtemp. Math. 123 (1989), 131--148.
\bibitem[Pa]{Pa} Z. Patakfalvi, \textit{Semi-positivity in positive characteristics}. arxiv: math/1208.5391.

\bibitem[Ra]{ra78} M. Raynaud, \textit{Contre-exemple au ``vanishing theorem'' en caract\'{e}ristique $p>0$}. C.P. Ramanujam-a tribute, pp. 273--278, Tata Inst. Fund. Res. Sudies in Math., 8, Spinger, Berlin-New York, 1978.
\bibitem[Ue]{ue} K. Ueno, \textit{Classification theory of algebraic varieties and compact complex spaces}. Lecture Notes in Mathematics, Vol. 439, 1975.
\bibitem[Vie1]{Vie1} E. Viehweg, \textit{Canonical divisors and the additivity of the Kodaira dimension for morphisms of relative dimension one}. Compositio Math., 35 (1977), no. 2, 197--223.
\bibitem[Vie2]{Vie2} E. Viehweg, \textit{Klassifikationstheorie algebraischer Variet\"{a}ten der Dimension drei}. Compositio Math., 41 (1980), no. 3, 361--400.
\bibitem[Vie3]{Vie} E. Viehweg, \textit{Weak positivity and the additivity of the Kodaira dimension for certain fiber spaces}. Adv. Stud. in Pure Math., 1, 1983, Algebraic varieties and analytic varieties, 329--353.
\bibitem[Xie]{xie} Q. Xie, \textit{Counterexamples to the Kawamata-Viehweg vanishing on ruled surfaces in positive characteristic}. J. Algebra 324 (2010), no. 12, 3494--3506.
\end{thebibliography}
\end{document}